\newcommand{\private}[1]{} 
\newcommand{\plfn}[1]{\private{%
{\footnote{{\bf Comm.Pascal:}{#1}}}}}
\newcommand{\refequ}[1]{$(\ref{#1})$}
\theoremstyle{plain}
\newtheorem{thm}{Theorem}[section]
\newtheorem{lemma}[thm]{Lemma}
\newtheorem{prop}[thm]{Proposition}
\theoremstyle{definition}
\newtheorem{defin}[thm]{Definition}
\theoremstyle{remark}
\numberwithin{equation}{section}
\newcommand{\ie}{\emph{i.e.\ }}
\newcommand{\BQ}{\mathbb Q}
\newcommand{\BC}{\mathbb C}
\newcommand{\BZ}{\mathbb Z}
\newcommand{\Bk}{\mathbf {k}}
\newcommand{\calC}{\mathcal{C}}
\newcommand{\calK}{\mathcal{K}}
\newcommand{\barN}{\underline{N}}
\newcommand{\barA}{\underline{A}}
\newcommand{\barDelta}{\underline{\Delta}}
\newcommand{\barM}{\underline{M}}
\newcommand{\quism}{\stackrel{\simeq}{\rightarrow}}
\newcommand{\leftquism}{\stackrel{\simeq}{\leftarrow}}
\newcommand{\iso}{\stackrel{\cong}{\rightarrow}}
\newcommand{\Apl}{A_{PL}}
\newcommand{\homset}{{\operatorname{Hom}}}
\newcommand{\id}{{\mathrm{id}}}
\newcommand{\sgn}{{\mathrm{sgn}}}
\newcommand{\pos}{\mathrm{pos}}
\newcommand{\pr}{\mathrm{pr}}
\newcommand{\codim}{\mathrm{codim}}
\newcommand{\TotCof}{{\mathrm{TotCof}}}%
\newcommand{\Top}{\mathrm{Top}}%
\newcommand{\sSets}{\mathrm{sSets}}%
\newcommand{\CDGA}{\mathrm{CDGA}}
\newcommand{\DGMod}{\mathrm{DGmod}}
\newcommand{\hocolim}{\operatorname{hocolim}}
\newcommand{\set}[1]{\left\{{#1}\right\}}
\newcommand{\refN}[1]{$(\ref{#1})$}
\newcommand{\refD}[1]{Definition \ref{D:#1}}
\newcommand{\refS}[1]{Section \ref{S:#1}}
\newcommand{\refL}[1]{Lemma \ref{L:#1}}
\newcommand{\refT}[1]{Theorem \ref{T:#1}}
\newcommand{\refE}[1]{Equation $(\ref{E:#1})$}
\newcommand{\refP}[1]{Proposition \ref{P:#1}}
\begin{document}

\title[Models for configuration spaces]{A remarkable DG-module model for configuration spaces}
\author{Pascal Lambrechts}
\author{Don Stanley}%
\address{P.L.: Universit\'e Catholique de Louvain, Institut Math\'ematique\\
2, chemin du Cyclotron\\B-1348 Louvain-la-Neuve, BELGIUM}
\email{pascal.lambrechts@uclouvain.be}
\address{D.S.:
University of Regina. Saskatchewan. CANADA.
}%
\email{Donald.Stanley@uregina.ca}%
\thanks{The authors gratefully acknowledge support by the 
Institute Mittag-Leffler (Djursholm, Sweden). }
\thanks{P.L. is Chercheur Qualifi\'e au F.N.R.S.}%
\subjclass{55R80 , 55P62}
\keywords{Poincar\'e duality. Lefschetz duality. Sullivan model. Configuration spaces.}%

\begin{abstract}
\private{{\Huge CONFIDENTIAL VERSION\\ONLY FOR COAUTHORS}\\
{\bf see the command ``private'' at the beginning of the tex file}}
Let $M$ be a simply-connected closed manifold and consider the (ordered)
configuration space of $k$ points in $M$, $F(M,k)$. 
In this paper we construct a commutative differential graded algebra which is a
potential candidate for a model of the rational homotopy type of $F(M,k)$. We prove that our model it is at least a
$\Sigma_k$-equivariant differential graded model.

We also study Lefschetz duality at the level of cochains and describe equivariant models of the complement
of a union of polyhedra in a closed manifold.
\end{abstract}

\maketitle


\section{Introduction}
\label{S:intro}

Let $M$ be a closed simply-connected triangulable manifold of
dimension $m$. The (ordered) configuration space of $k$ points in $M$ is the space
$$F(M,k):=\{(x_1,\cdots,x_k)\in M^k:x_i\not=x_j\textrm{ for }i\not=j\}.$$
An interesting problem is whether the homotopy type of that configuration space 
depends only on the homotopy type of $M$. Longoni and Salvatore
 \cite{LongSalv} have discovered an example of two homotopy equivalent manifolds whose configuration
 spaces of two points are not homotopy equivalent. Their examples are non-simply connected.
By contrast, a general position argument implies that for a $2$-connected closed manifold
 the configuration space of two points depends only on the homotopy type  of the
 manifold. More generally we have proved in \cite{LS:FM2Q} that the rational homotopy
 type of $F(M,2)$ depends only on the rational homotopy type of $M$, under the 
 $2$-connectivity hypothesis,
  and we have build an explicit model (in the sense of Sullivan) of that configuration space out
 of a model of $M$. 
 
 The goal of the present paper is to exhibit a promising candidate for the model of the rational 
 homotopy type of the $F(M,k)$. To explain this, first recall the Sullivan fu¬nctor
 $$\Apl\colon\Top\to \CDGA$$
 where $\CDGA$ is the category of commutative differential graded algebras. The main feature
of this functor is that the rational homotopy type of a simply-connected space of finite type, $X$,
is encoded in any CDGA quasi-isomorphic to $\Apl(X)$. Such a CDGA is called a \emph{CDGA-model}
 of $X$.
 
 In \cite{LS:PDCDGA-x2} we have proved that any simply-connected manifold $M$ admits a CDGA
 model, $(A,d)$, such that $A$ is a Poincar\'e duality algebra 
of dimension $m=\dim M$.
 We can then define a \emph{diagonal class}
$$\Delta:=\sum_\lambda(-1)^{\deg(a_\lambda)} a_\lambda\otimes a_\lambda^*\in A\otimes A$$
where $\{a_\lambda\}$ is a basis of $A$ and $\{a_\lambda^*\}$ is the Poincar\'e dual basis.
In the present paper we describe a CDGA 
\begin{equation}
\label{E:FAk}
F(A,k):=\left(\frac{A^{\otimes k}\otimes\operatorname{E}(g_{ij}:1\leq i<j\leq k)}{(\textrm{Arnold and symmetry relations})},d(g_{ij})=\pi^*_{ij}(\Delta)\right)
\end{equation}
where $\operatorname{E}(g_{ij})$ is an exterior algebra on 
${{k}\choose{2}}$ 
generators $g_{ij}$ of degree $m-1$, $\pi^*_i(a)=1^{\otimes i-1}\otimes a\otimes1^{\otimes k-i}\in A^{\otimes k}$ and
$\pi_{ij}^*(a\otimes b)=\pi^*_i(a){\cdot}\pi^*_j(b)$ (see \refD{FAk} for a complete definition.)

When $k=2$, $F(A,2)$ is weakly equivalent to the CDGA model of $F(M,2)$ built in \cite[Theorem 5.6]{LS:FM2Q}, 
and when $M$ is a complex projective variety then $F(H^*(M;\BQ);k)$ is equivalent to the Fulton-MacPherson-Kriz 
CDGA-model of $F(M,k)$
built in \cite{FM:CC} and \cite{Kriz:FMkQ}.
We are not able to prove in general  that for $k\geq 3$, $F(A,k)$ is a CDGA-model of $F(M,k)$ but at least we can
prove that it is an equivariant DGmodule model of it. More precisely the inclusion $F(M,k)\hookrightarrow M^k$
and Kunneth quasi-isomorphism induce an $\Apl(M)^{\otimes k}$-module structure on $\Apl(F(M,k))$.
Suppose given quasi-isomorphisms of CDGA, $A\leftquism R\quism\Apl(M)$. Our main result (\refT{main})
states that $\Apl(F(M,k))$ and $F(A,k)$ are weakly equivalent $R^{\otimes k}$-DGmodules, even $\Sigma_k$-equivarianly
where  $\Sigma_k$ is the symmetric group on $k$ letters acting by permutation of the factors.

Our proof goes through an ``equivariant cochain-level Lefschetz duality theorem for a system of subpolyhedra
in a closed manifold.'' In more detail, classical Lefschetz duality determines $H^*(W\smallsetminus X)$ from
the map $H_*(X)\to H_*(W)$ when $X$ is a subpolyhedron of a closed oriented manifold $W$.
 In \cite{LS:AlgPEmb} we have studied Lefschetz duality at the level of models instead of homology. 
In this paper we generalize this further by considering $X$ as a union of a finite family of subpolyhedra 
$\{X_e\hookrightarrow W\}_{e\in E}$. The idea is that Lefschetz duality gives a weak equivalence
between $C^*(W\smallsetminus\cup_{e\in E}X_e)$ and the mapping cone of the dual of the map
$C^*(W)\to C^*(\cup_{e\in E}X_e)$. On the other hand a generalized Mayer-Vietoris theorem gives
a weak equivalence between $C^*(\cup_{e\in E}X_e)$ and a chain complex built out
of the chain complexes $C^*(\cap_{e\in\gamma}X_e)$ for non empty subsets $\gamma\subset E$.
When a discrete group $G$ acts on the manifold $W$ preserving in a certain sense the
system $\{X_e\hookrightarrow W\}_{e\in E}$, all these weak equivalences can be choosen
to be equivariant.

This generalized Lefschetz duality can be applied  to the system of partial diagonals
$\Delta_{ij}=\{(x_1,\cdots,x_k)\in M^k:x_i=x_j\}$ so that
$F(M,k)=M^k\smallsetminus\cup_{1\leq i<j\leq k}\Delta_{ij}$.
This approach was already taken by Bendersky and Gitler in \cite{BenGit}.
The difference with their paper is that we apply Lefschetz duality at the level of models
in order to get a model of $F(M,k)$ instead of a model of
the pair $(M^k,\cup_{1\leq i<j\leq k}\Delta_{i,j})$ as they do.
Also we carefully study the action of the symmetric group on that model.

The model \refN{E:FAk} also  gives rise to a spectral sequence by filtering by the length 
in the variables $g_{ij}$. This spectral sequence coincides with the two spectral sequences
studied in \cite{FT:confMassey}.
In particular, as F\'elix and Thomas  show in that paper, this spectral sequence does not always 
collapse when $k\geq4$.
Also the fixed point CDGA, $F(A,k)^{\Sigma_k}$, is a DGmodule model of the unordered configuration space, and
 F\'elix and Tanr\'e proved in \cite{FT:Hunordconf} that the associated spectral sequence does collapse.

In the last section we explain how our approach could be useful to the study of other complement spaces,
such as complements of unions of projective subspaces in $\BC P(n)$.

Here is a plan of this paper. In \refS{notation} we recall some notation in particular for
the suspension and dual of DG-modules and for their mapping cones.
In \refS{FAk} we construct in detail the CDGA $F(A,k)$ associated to a Poincar\'e duality
CDGA $A$. In \refS{functorsC*} we introduce a variant $\widehat{C}^*$ of the cochain
algebra functor $C^*$ for which the excision quasi-isomorphism is actually an isomorphism.
In \refS{groupactionsmodules} we fix some notation  for the action of a discrete group on
DG-modules. In \refS{equivD} we establish an equivariant cochain level Lefschetz duality theorem giving a model 
of the complement $W\smallsetminus X$. 
In \refS{cubical}  we study, for a set of subpolyhedra $\{X_e\subset W\}_{e\in E}$, 
cubical diagrams like $\{\textrm{subsets }\gamma\subset E\}\to
\{C^*(\underset{e\in\gamma}\cap X_e)\}$ and define their \emph{total cofibres}
which will turn out to be models for the cochains on the  complement of the union of polyhedra, 
$C^*(W\smallsetminus\cup_{e\in E}X_e)$. In \refS{Gcubical} 
we put an action on the cubical diagrams and total cofibers from the previous section.
In \refS{equivLefschetzSystPolyh} we finally establish the equivariant cochain level Lefschetz duality 
for a system of polyhedra (\refT{EquivLefschSystem}).
In \refS{modelconf} we apply the above theory to prove that $F(A,k)$ is an equivariant DGmodule model
of $F(M,k)$. The last section is an informal discussion about other possible applications of this approach.¤

\subsection{Acknowledgment}
We thank  Soren Illmann for discussions on simplicial actions. We acknowledge support of  the institute Mittag-Leffler where
part of this research was done during a common
stay of the two authors.
\section{Notation}\label{S:notation}
In this short section we recall some standard notation.

We fix a ground field $\Bk$.
We will consider
non-commutative and commutative non negatively graded differential algebras,
or DGA and CDGA for short. The degrees are written as superscripts and the
differential increases the degree. If $R$ is a (C)DGA we will consider also right
differential graded modules over $R$  ($R$-DGmodules for short,
see \cite{FHT:RHT} for the precise definitions).

The $k$-th suspension of an $R$-dgmodule $M$ is the $R$-dgmodule
$s^kM$ defined by
\begin{itemize}
\item $(s^kM)^i=M^{i+k}$ as vector spaces,
\item $(s^kx){\cdot}r=s^k(x{\cdot}r)$ for $x\in M,r\in R$,
\item $d(s^kx)=(-1)^ks^k(dx)$ for $x\in M$.
\end{itemize}
Therefore $\deg(s^{k}x)=-k+\deg(x)$. We have a natural isomorphism $s^kM\otimes s^l N\cong
s^{k+l}(M\otimes N)$ sending $s^kx\otimes s^ly$ to $(-1)^{l\deg(x)}s^{k+l}(x\otimes y)$.

The dual of a graded vector space $V$ is the graded vector space $\#V$
defined by
$$(\#V)^k=\homset(V^{-k},\Bk).$$
If $M$ is a right $R$-dgmodule then $\#M$ inherits an obvious {\em left}
$R$-dgmodule structure. When $R$ is a commutative DGA, we
can turn $\#M$ into a right $R$-dgmodule structure
by the rule
$$
\phi{\cdot}r:=(-1)^{\deg(\phi){\cdot}\deg(r)}r{\cdot}\phi,
\mbox{\,\,for $r\in R$, $\phi\in\#M$.}
$$
We have canonical isomorphisms $s^{k}\#M\cong\#s^kM$, given by $(s^kf)(s^kx)=(-1)^{k\deg(f)}f(x)$,
and, under a finite type assumption,  $\#M\otimes\#N\cong\#(M\otimes N)$, given by 
$(f\otimes g)(x\otimes y)=(-1)^{\deg(g){\cdot}\deg(x)}f(x){\cdot}g(y)$.

If $f\colon (M,d_M)\to (N,d_N)$ is a morphism of $R$-dgmodule, the
{\em mapping cone} of $f$ is the $R$-dgmodule
$$C(f):=(N\oplus_fsM,d)$$
defined by
\begin{itemize}
\item $C(f)=N\oplus sM$ as $R$-module,
\item $d(y,sx)=(d_N(y)+f(x),-s(d_M(x)))$ for $x\in M,y\in N$.
\end{itemize}
\section{The CDGA $F(A,k)$}
\label{S:FAk}
An \emph{oriented Poincar\'e algebra of formal dimension $m$} is a
couple $(A,\omega)$ where $A$ is a graded commutative
$\Bk$-algebra and $\omega\colon A^m\to\Bk$ is a linear form such
that each pairing $A^i\otimes A^{m-i}\to\Bk,a\otimes
b\mapsto\omega(a.b)$ is non-degenerate. When $A$ is also equipped
with a differential that makes it a CDGA, the following
definition, which comes from \cite[Definition 4.6]{LS:FM2Q} or
\cite[Definition 2.2]{LS:PDCDGA-x2}, expresses the compatibility between
the Poincar\'e duality and the CDGA structures:
\begin{defin}\label{D:PD-CDGA}
An
\emph{oriented differential Poincar\'e duality algebra} or
\emph{oriented Poincar\'e duality CDGA} is a triple $(A,d,\omega)$
such that
\begin{itemize}
\item[(i)] $(A,d)$ is a CDGA;
\item[(ii)] $(A,\omega)$ is an oriented Poincar\'e duality
algebra of formal dimension $m$;
\item[(iii)] $\omega(dA)=0$.
\end{itemize}
\end{defin}

Note that when $(A,\omega)$ is a \emph{connected} Poincar\'e algebra and
$(A,d)$ is a CDGA such that the class of maximal degree represents
a non trivial homology class then  $(A,d,\omega)$ is a Poincar\'e duality
CDGA, as proved in \cite[Proposition 4.8]{LS:FM2Q}. The main result of
\cite{LS:PDCDGA-x2} states that any closed oriented simply-connected manifold
admits a CDGA-model which is a connected Poincar\'e duality CDGA.

Let $A$ be an oriented Poincar\'e duality CDGA of formal dimension
$m$. Next we recall the diagonal class $\Delta\in(A\otimes A)^m$
as defined in \cite[Definition 4.4]{LS:FM2Q}. Let
$\set{a_\lambda}_{0\leq\lambda\leq N}$ be a basis of $A$ and
$\set{a_\lambda^*}$ be its Poincar\'e dual basis with respect to
the orientation, that is
$\omega(a_\lambda{\cdot}a_\mu^*)=\delta_{\lambda\mu}$ where
$\delta_{\lambda\mu}$ is the Kronecker symbol. The diagonal
class is
\begin{equation}
\label{E:Delta}
\Delta:=\sum_\lambda(-1)^{\deg(a_\lambda)}a_\lambda\otimes
a_\lambda^*\in A\otimes A \end{equation}  It is proved in
\cite[Proposition 4.3, Proposition 4.11, and remark after
Definition 4.4]{LS:FM2Q} that $\Delta$ is a cocycle of degree $m$
which is independent of the choice of the basis. When $A$ is connected this diagonal class is also, up to a
scalar multiple, independent of the choice of the
orientation. See also \cite{Abrams:2DTQFTFrob}, where it is explained how a Poincar\'e duality algebra,
as a Frobenius algebra, becomes a coalgebra, and hence the diagonal class can also be seen as the coproduct of the 
orientation class.\plfn{I do not believe that thi remark is very relevant but the referee asked for it...}

Consider the CDGA $A^{\otimes k}=A\otimes\cdots\otimes A$. 
For $1\leq i\leq k$ we consider the CDGA maps
$$
\pi_i^*\colon A\to A^{\otimes k}\,,\,a\mapsto
\underbrace{1\otimes\cdots\otimes1}_{i-1}\otimes a \otimes
\underbrace{1\otimes\cdots\otimes1}_{k-i}
$$
and for $1\leq i<j\leq k$ the maps
$$
\pi_{ij}^*\colon A\otimes A\to A^{\otimes k}\,,\,a\otimes b\mapsto
\underbrace{1\otimes\cdots\otimes1}_{i-1}\otimes a \otimes
\underbrace{1\otimes\cdots\otimes1}_{j-i-1}\otimes b \otimes
\underbrace{1\otimes\cdots\otimes1}_{k-j},
$$
that is $\pi^*_{ij}(a\otimes b)=\pi_i^*(a){\cdot}\pi^*_j(b)$.

Consider the relative Sullivan algebra (\cite[\S 14]{FHT:RHT})
$$
\left(A^{\otimes k}\otimes\wedge(g_{ij}:1\leq i<j\leq k),d\right)
$$
with $\deg(g_{ij})=m-1$ and $d(g_{ij})=\pi^*_{ij}(\Delta)$. Notice
that $d^2=0$ because $\Delta$ is a cocycle.
By convention we set\plfn{The referee in remark 9 asks to delay this convention but actually we use it in the arnold relation just below as well as in
Definition 3.4(ii)}
$$g_{ji}=(-1)^{m}g_{ij}.$$
Let $I$ be the ideal of $A^{\otimes k}\otimes\wedge(g_{ij})$
generated by the following relations
\begin{itemize}
\item[(i)] the Arnold or three-terms relations
$$g_{ij}g_{jl}+g_{jl}g_{li}+g_{li}g_{ij}\quad\quad\textrm{for }1\leq i<j<l\leq k;$$
\item[(ii)] the symmetry relations
$$\left(\pi^*_i(a)-\pi^*_j(a)\right)g_{ij}\quad\quad\textrm{for }1\leq i<j\leq k\textrm{ and }a\in A;$$
\item[(iii)] $g_{ij}^2=0\quad\quad\textrm{for }1\leq i<j\leq k$.
\end{itemize}

\begin{lemma}\label{L:Iideal}
The ideal $I$ generated by (i)-(iii) above is a differential
ideal of $\left(A^{\otimes k}\otimes\wedge(g_{ij}),d\right)$.
\end{lemma}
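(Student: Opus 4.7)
The plan is to verify $d(I)\subseteq I$ by applying the derivation $d$ to each of the three families of generators (i)--(iii) and showing the result lies in $I$. The essential algebraic input (beyond Leibniz and the formula $d(g_{ij})=\pi^*_{ij}(\Delta)$) is the \emph{diagonal identity}
\[(a\otimes 1)\cdot\Delta \;=\; (1\otimes a)\cdot\Delta \quad\text{in }A\otimes A,\]
valid for every $a\in A$. This is a standard consequence of Poincar\'e duality for $(A,\omega)$ and the formula \refE{Delta}, and can be verified by pairing both sides against an arbitrary $b\otimes c$ via $\omega\otimes\omega$.

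First I would handle the symmetry relations (ii). Leibniz gives
\[d\!\left((\pi^*_i(a)-\pi^*_j(a))g_{ij}\right)=(\pi^*_i(da)-\pi^*_j(da))g_{ij}\;\pm\;(\pi^*_i(a)-\pi^*_j(a))\pi^*_{ij}(\Delta).\]
The first summand is again a generator of type (ii). The second summand equals $\pm\pi^*_{ij}\!\left((a\otimes 1-1\otimes a)\Delta\right)$, which vanishes by the diagonal identity. Next, for the Arnold relations (i), one applies Leibniz to $g_{ij}g_{jl}+g_{jl}g_{li}+g_{li}g_{ij}$ (using $g_{li}=(-1)^m g_{il}$). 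Each term yields a product of the form $\pi^*_{ab}(\Delta)\cdot g_{cd}$. Using the symmetry relations (now known to lie in $I$), one can ``slide'' the index of $\pi^*_{ab}(\Delta)$ that agrees with an endpoint of $g_{cd}$ over to the opposite endpoint, e.g.\ $\pi^*_{ij}(\Delta)\,g_{jl}\equiv\pi^*_{il}(\Delta)\,g_{jl}\pmod{I}$. After these slides every term becomes a scalar multiple of the single cocycle $\pi^*_{il}(\Delta)$, and the six resulting monomials cancel pairwise by graded commutativity in $A^{\otimes k}\otimes\wedge(g_{ij})$.

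For the $g_{ij}^2$ relation (iii), compute $d(g_{ij}^2)=(1+(-1)^{m-1})\,g_{ij}\,\pi^*_{ij}(\Delta)$. When $m$ is even this already vanishes. When $m$ is odd it equals $2g_{ij}\pi^*_{ij}(\Delta)$; the same sliding trick reduces this modulo $I$ to $2g_{ij}\pi^*_i(\mu(\Delta))$, where $\mu\colon A\otimes A\to A$ is the multiplication. A symmetry argument pairing $a_\lambda\leftrightarrow a_\lambda^*$ shows that $\omega(\mu(\Delta))=\sum_\lambda(-1)^{\deg a_\lambda}=\chi(A)$ satisfies $\chi(A)=(-1)^m\chi(A)$, so $\chi(A)=0$ for $m$ odd in characteristic $\neq 2$ (in characteristic $2$ the leading coefficient $2$ already vanishes). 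Non-degeneracy then forces $\mu(\Delta)=0$ and hence $d(g_{ij}^2)\in I$.

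I expect the main obstacle to be the Arnold computation: keeping careful track of the signs produced by the convention $g_{li}=(-1)^m g_{il}$, by graded commutation of the $\pi^*_\bullet(a)$ factors past the $g_{ab}$'s, and by the Leibniz rule on each quadratic monomial. The underlying idea -- reduce all diagonals to a common $\pi^*_{il}(\Delta)$ and exploit antisymmetry -- is clean, but the bookkeeping is where sign errors are most likely to occur.
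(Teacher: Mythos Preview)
Your overall strategy matches the paper's: establish (ii) via the diagonal identity, then use the symmetry relations to ``slide'' indices in (i) and (iii). Two remarks are in order.

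For (i), the assertion that all six terms slide to multiples of the single element $\pi^*_{il}(\Delta)$ is not quite right: the two terms carrying the factor $g_{li}$ only permit sliding between the indices $i$ and $l$, so after sliding they become multiples of $\pi^*_{ij}(\Delta)\,g_{il}$ rather than of $\pi^*_{il}(\Delta)$. What actually happens is that the six terms pair off, each pair sharing a common factor $\pi^*_{ab}(\Delta)\,g_{cd}$ after one slide, and each pair then cancels. The paper packages this same computation by showing $(\pi^*_{ij}(\Delta)-\pi^*_{il}(\Delta))\,g_{jl}\in I$ directly from (ii) and remarking that the rest follows; your description is the same idea, just slightly mis-stated.

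For (iii), the step ``non-degeneracy forces $\mu(\Delta)=0$'' from $\omega(\mu(\Delta))=0$ requires $\omega|_{A^m}$ to be injective, hence $\dim A^m=1$, i.e.\ $A$ connected, which the lemma does not assume. The paper avoids $\mu(\Delta)$ altogether: for $m$ odd one chooses a basis with $a^*_\lambda=a_{N-\lambda}$, so that $\Delta$ is itself a sum of terms $a\otimes b-b\otimes a$ with $|a||b|$ even, and then $\pi^*_{ij}\bigl(a\otimes b-(-1)^{|a||b|}b\otimes a\bigr)g_{ij}\in I$ by (ii). The same pairing argument, applied after multiplying, shows directly that $\mu(\Delta)=0$ for $m$ odd without any connectedness hypothesis, so your route can be repaired.
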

\begin{proof}
One computes that
\begin{eqnarray*}
\left(d(g_{ij}-g_{il})\right){\cdot}g_{jl}
&=&\left(\pi^*_{ij}(\Delta)-\pi^*_{il}(\Delta)\right){\cdot}g_{jl}\\
&=&\sum_\lambda\pi^*_i(a_\lambda)\left(\pi^*_j(a^*_\lambda)-\pi^*_l(a^*_\lambda)\right)g_{jl}\in
I
\end{eqnarray*}
and this easily implies that
$d\left(g_{ij}g_{jl}+g_{jl}g_{li}+g_{li}g_{ij}\right)\in I$.

Using Poincar\'e duality, it is straightforward
to check that
$
(1\otimes a){\cdot}\Delta-(a\otimes 1){\cdot}\Delta=0$ in $A\otimes A
$
(see \cite[Lemma 4.5]{LS:FM2Q}). This implies that
$d\left(\left(\pi^*_i(a)-\pi^*_j(a)\right)g_{ij}\right)\in I$.

It remains to prove that $d(g_{ij}^2)\in I$. If $m$ is even
it is immediate by the Leibniz rule. If $m$ is odd we can choose a basis $\{a_\lambda\}_{0\leq \lambda\leq N}$
of $A$ such that $a^*_\lambda=a_{N-\lambda}$, hence 
$\Delta=\sum_{\lambda=0}^{(N-1)/2}(-1)^{|a_ \lambda |}(a_ \lambda\otimes a_{N-\lambda}-a_{N-\lambda}\otimes a_ \lambda)$.
From the symmetry relations (ii) one deduces easily that for $a,b\in A$ we have
$\pi_{ij}^*(a\otimes b-(-1)^{|a||b|}b\otimes a)g_{ij}\in I$.  Therefore $d(g_{ij}^2)=2\pi_{ij}^*(\Delta)g_{ij}\in I$. 
\end{proof}
\rmk The hypothesis that $A$ is a Poincar\'e duality CDGA is
essential for making $I$ a differential ideal, hence for $F(A,k)$ below to be a CDGA.

\begin{defin}\label{D:FAk}
Let $(A,d)$ 
be a Poincar\'e duality CDGA of formal dimension $m$. We
define the \emph{$k$-configuration CDGA} as
$$
F(A,k):=\left(\frac{A^{\otimes k}\otimes\wedge(g_{ij}:1\leq
i<j\leq k)}{I},d(g_{ij})=\pi^*_{ij}(\Delta)\right)
$$
where $g_{ij}$, $\Delta$, $\pi^*_{ij}$, and $I$ are defined as
above.
We equip this CDGA  with a left action of the symmetric group
$\Sigma_k$ on $k$ letters generated by
\begin{itemize}
\item[(i)]$\sigma{\cdot}\left(\pi^*_i(a)\right)=\pi^*_{\sigma(i)}(a)
\quad\quad\textrm{for }1\leq i\leq k,a\in A, \textrm{ and
}\sigma\in\Sigma_k;$
\item[(ii)]$\sigma{\cdot}g_{ij}=g_{\sigma(i)\sigma(j)}\quad\quad\quad\textrm{for }1\leq i<j\leq k\textrm{ and
}\sigma\in\Sigma_k.$
\end{itemize}
\end{defin}
When $A$ is connected, since  the diagonal class is independent of
the orientation (up to a scalar multiple), the CDGA $F(A,k)$ does
not depend on the choice of the orientation.

\section{The cochain functor $C^*$ and excision isomorphisms}
\label{S:functorsC*} In this paper we will consider mainly the two
following contravariant cochain functors
\begin{itemize}
\item the singular cochain functor with coefficients in a field $\Bk$,
$$S^*(-;\Bk)\colon\Top\to\Bk\!-\!DGA$$
where the algebra structure comes from the cup product defined
through the usual front face/back face formula;
\item the Sullivan  functor of piecewise polynomial forms
with coefficients in a field $\Bk$ of characteristic zero,
$$\Apl(-;\Bk)\colon\Top\to\Bk\!-\!CDGA$$
as defined in \cite{BG:RHT} or \cite[\S 10]{FHT:RHT}.
\end{itemize}
We will denote by $C^*$ either of the two functors $S^*(-;\Bk)$
or  $\Apl(-;\Bk)$. Notice that an element $\omega\in C^*(X)$ is
completely determined by its values
$\langle\omega,\sigma\rangle$  (which belong to $\Bk$ when $C^*=S^*(-;\Bk)$ and
to the CDGA  $(\Apl^*)_{\deg(\sigma)}$ defined in \cite[\S 10 (c)]{FHT:RHT} when  $C^*=\Apl$)
on singular simplices $\sigma$ in the singular simplicial set $S_\bullet(X)$.

The functor $C^*$ extends to pairs of topological spaces by 
 $C^*(X,A):=\ker(C^*(X)\to C^*(A))$.
If $(X,A)$ is a pair of topological spaces and if
$i\colon(X',A')\subset(X,A)$ is a subpair such that
$\overline{X\smallsetminus X'}\subset\mathrm{int}(A)$ and $A'=X'\cap A$
then the excision theorem implies that the restriction map
$$ C^*(i)\colon C^*(X,A)\quism C^*(X',A')$$
is a quasi-isomorphism. However $C^*(i)$ is almost never an
isomorphism. We show now how we can replace $C^*$ by a
quasi-isomorphic functor such that the morphism induced by $i$ is
indeed an isomorphism, at least on suitable triangulated pairs.
This will be usefull in our proof of \refT{equLefschetz} of equivariant cochain level
Lefschetz duality.

Let $\calK$ be the category of \emph{ordered simplicial complexes} defined as follows.%
\plfn{I
changed the category to \emph{ordered}
simplicial complexes because the simplicial set that I associated
to a (non ordered) simplicial complex in the previous version was actually not homotopy equivalent!!
This implies further changes below}
An (abstract) simplicial complex is a collection of finite non-empty sets, called \emph{simplices},
such that every non-empty subset of a simplex is also a simplex,
\cite[\S 3]{Munkres:AT}. The union of that collection is the set of \emph{vertices}.
An object of  $\calK$ is a simplicial complex with a partial order on the vertices such that
each simplex is linearly ordered. A morphism of $\calK$ is a simplicial map that preserves the
order of the vertices. We denote by $|K|$ the geometric realization of an ordered simplicial complex $K$.
 
Our goal is to build
a functor
$$
\widehat C^*:\calK\to\Bk\!-\!(C)DGA
$$
satisfying the two following properties \begin{itemize}
\item[(A)] there is a natural quasi-isomorphism of (C)DGA
$C^*(|-|)\quism \widehat C^*$, and
\item[(B)]  $\widehat C^*$
satisfies the following \emph{strict
excision statement}:\\
Let $(K,L)$ be a pair of ordered simplicial complexes, let
$K'\subset K$ be a subcomplex and set $L'=L\cap K$. If $K'\cup
L=K$ then the inclusion $i\colon (K',L')\subset(K,L)$ induces an
isomorphism
$$
\widehat C^*(i)\colon \widehat C^*(K,L)\stackrel{\cong}\to
\widehat C^*(K',L').
$$
where $\widehat C^*$ is extended to pairs by setting $\widehat C^*(K,L)=\ker(\widehat C^*(K)
\to\widehat C^*(L))$.
\end{itemize}

We treat separately the cases $S^*$ and $\Apl$. Suppose first that
$C^*=S^*$ are the singular cochains.  Let $\sSets$ be the category of simplicial sets.
 To an ordered simplicial complex $K$, one associates a simplicial set $K_\bullet$
 defined by
 \[K_n=\{(v_0,\dots,v_n):\{v_0,\dots,v_n\}\textrm{ is a simplex of $K$ and }
 v_0\leq\cdots\leq v_n\}
 \]
and the faces and degeneracies are defined by forgetting or repeating a vertex ,
see \cite[Example (1.3)]{Curtis:SHT} or \cite[Example 8.1.8]{Weibel:HA}.
We have a homeomorphism $|K|\cong|K_\bullet|$, see \cite[Exercise 8.1.4]{Weibel:HA}.

The normalized chain complex of the free simplicial abelian group $\BZ[K_\bullet]$
generated by $K_\bullet$ is isomorphic to the oriented chain complex of $K$ 
as defined in \cite[\S 5]{Munkres:AT}. We denote it by $N_*(\BZ[K_\bullet])$ and we consider
the dual cochain complex
\[\widehat C^*(K):=\homset(N_*(\BZ[K_\bullet]),\Bk)\]
which becomes a $\Bk$-DGA by defining a cup product through the usual front face/back face formula
as in \cite[\S 49]{Munkres:AT}. This defines a functor
\[
\widehat C^*:\calK\to\Bk\!-\!DGA
\]
and by \cite[Theorem 49.1]{Munkres:AT} we have a natural  quasi-isomorphism of DGA
\[C^*(|K|)=S^*(|K|;\Bk)\quism\widehat C^*(K).\]

We check that $\widehat C^*$ satisfies the strict excision statement.
Notice that an element $\phi\in \widehat C^*(K,L)$ is determined by its values
in $\Bk$ on the simplices of $K$.
Suppose that $i\colon (K',L')\hookrightarrow(K,L)$ is an inclusion of pairs of ordered
simplicial complexes 
with $L'=K'\cap L$ and $K=K'\cup L$. We show that $\widehat C^*(i)$
is surjective. Let $\phi'\in \widehat C^*(K',L')$. Define 
$\phi\in \widehat C^*(K,L)$
by $\phi(\sigma)=\phi'(\sigma)$ if $\sigma$ is a simplex in $K'$ and 
$\phi(\sigma)=0$ if $\sigma$ is a simplex in $L$. This defines $\phi$ coherently, 
since $\phi'(\sigma)=0$ when
 $\sigma\in K'\cap L=L'$, and exhaustively because $K=K'\cup L$.
 Clearly $\widehat C^*(i)(\phi)=\phi'$, hence $\widehat C^*(i)$ is sujective.
 For the injectivity notice that if $\phi\in \widehat C^*(K,L)$ is zero on each simplex
 of $K'$ then it is zero everywhere since it is zero on $L$ and  $K=K'\cup L$.
 This proves that   $\widehat C^*$ satisfies condition (B).
 
Suppose now that $C^*=\Apl$ is the functor of piecewise polynomial
forms and let $S_\bullet(X)$ be the 
simplicial set of singular simplices of a topological space $X$.
Recall from \cite[\S 10(c)]{FHT:RHT} that $\Apl\colon\Top\to
CDGA$ actually factors through the functor
$S_\bullet\colon\Top\to\sSets$  by the
way of another functor $\Apl\colon\sSets\to\CDGA$. We define
$$
\widehat C^*\colon\calK\to\CDGA\,,\,K\mapsto\Apl(K_\bullet).
$$
For any ordered simplicial complex $K$, the natural weak
equivalence $K_\bullet\quism S_\bullet(|K_\bullet|)$ induces a
quasi-isomorphism of CDGA
$$
C^*(|K|)=\Apl(|K|)\stackrel{\textrm{def}}=\Apl(S_\bullet(|K|))\cong
\Apl(S_\bullet(|K_\bullet|))\quism \Apl(K_\bullet)
\stackrel{\textrm {def}}=\widehat C^*(K).
$$
An element $\phi\in \widehat C^*(K,L)$ is determined by its values
in $\Apl^*$ on the non degenerated simplices of $K_\bullet$, hence on the genuine simplices of $K$.
The proof that $\widehat C^*$ satisfies
the strong excision isomorphism is analogous to the case $C^*=S^*(-;\Bk)$, the only difference is
that for the surjectivity one needs to make sure that the constructed  cochain $\phi$ commutes 
with the boundaries and degeneracies, which is straightforward.

\section{Group actions on DGmodules}
\label{S:groupactionsmodules}
 Let $G$ be a discrete group. Except when stated otherwise, we will suppose
that any action of a group is on the left and  that $G$
acts trivially on $\Bk$. 
When $G$ acts on two sets we will assume that it acts on its
product through the diagonal action. When $G$ acts on a set
equipped with additional algebraic structure, we will assume that this
action is such that each map defining this structure is
equivariant. In particular the action  on a
$\Bk$-module is linear, the action
on a tensor product is 
diagonal, $g{\cdot}(v\otimes w)=(g{\cdot}v)\otimes(g{\cdot}w)$, the action  on an
algebra $R$ is multiplicative, $g{\cdot}(r{\cdot}r')=(g{\cdot}r){\cdot}(g{\cdot}r')$. If $G$ acts on an algebra,
$R$, and on an $R$-module, $M$, then 
we assume that $g{\cdot}(r{\cdot}x)=(g{\cdot}r){\cdot}(g{\cdot}x)$ for $g\in G$, $r\in R$, and
$x\in M$. When $G$ acts on a graded object we assume that the action
preserves the degree and for the action  on a differential object $(M,d)$, that
the differential is equivariant, $d(g{\cdot}x)=g{\cdot}(dx)$. If $G$
acts on a $\Bk$-module $V$, then the \emph{dual action} of $G$ on
$\#V:=\homset(V,\Bk)$ is the  action defined by the formula, for
$\phi\in\#V$, $v\in V$, and $g\in G$,
$$\langle g{\cdot}\phi,v\rangle=\langle\phi,g^{-1}{\cdot}v\rangle.$$

To emphasize these assumptions, if $R$ is a $\Bk$-DGA on which $G$
is acting as above, we will say that $R$ is a $G$-$\Bk$-DGA. Also
if $M$ is an $R$-DGmodule on which $G$ is acting then we will say
that $M$ is a $G$-$R$-DGmodule. 

If $f\colon M\to N$ is a morphism of $G$-$R$-DGmodule, \ie an equivariant map of $R$-DGmodules,
then its mapping cone $Cf=N\oplus sM$ inherits a structure of $G$-$R$-DGmodule by
$g{\cdot}(y,sx)=(g{\cdot}y,s(g{\cdot}x))$ for $x\in M$ and $y\in N$.

Let $W$ be a topological space equipped with a left continuous
action of the group $G$ and recall from \refS{functorsC*} the functor $C^*$
which is either $S^*(-;\Bk)$ or $\Apl$.
As we noticed at the beginning of that section, an element $\omega\in C^*(W)$
is completely determined by its values $\langle\omega\,,\,\sigma\rangle$
(in $\Bk$ or in $\Apl^*$)
on simplices $\sigma\in S_\bullet(W)$.
We define an action of $G$ on $C^*(W)$ by
\begin{equation}
\label{E:actG}
\langle(g\cdot\omega)\,,\,\sigma\rangle\,=\,
\langle \omega\,,\,(g^{-1}\cdot\sigma) \rangle\quad
\textrm{ for }\omega\in C^*(W),\,\sigma\in S_\bullet(W),\,g\in G.
\end{equation} 
Since the action of $G$ on $S_\bullet(W)$ commutes 
with taking front face and back face, one checks that this induces
a structure of $G$-$\Bk$-DGA on $S^*(X;\Bk)$. It is also straightforward to check that it endows
$\Apl(W)$ with a structure of $G$-CDGA.

If $X\subset W$ is a subspace stable by $G$ (that is, $g\cdot X\subset X$ for $g\in G$)
then $C^*(X)$ is also a $G$-$\Bk$-(C)DGA and the restriction map 
$C^*(W)\to C^*(X)$, which is $G$-equivariant and of DGA, endows $C^*(X)$ with a structure of
$G$-$C^*(W)$-DGModule.

Suppose that $W$ is the realization of an ordered simplical complex, also denoted by $W$,
and that the action of $G$ respects that triangulation, \ie\ is simplicial and preserves the order of vertices.
Formulas \refN{E:actG} for simplices $\sigma$ of the simplicial complex $W$, defines
a structure of $G$-$\Bk$-(C)DGA on the chain complex $\widehat C^*(W)$
defined in \refS{functorsC*}. Also if $X$ is a subpolyhedron of $W$ stable by the action of $G$ then 
$\widehat C^*(X)$ becomes a $G$-$\widehat C^*(W)$-DGModule.

\section{Orientation twisted action and equivariant Lefschetz duality}
\label{S:equivD}
Let $W$ be a closed oriented connected manifold
of dimension $n$ on which $G$ acts continuously. We have an
induced action on the top homology group $H_n(W;\BZ)$. This
determines a $1$-dimensional representation over $\Bk$ through the homomorphism
\begin{equation}
\label{E:orrep}
\rho\colon G\to\BZ/2=\{+1,-1\}\subset\Bk,g\mapsto \rho(g)
\end{equation}
defined by the formula
$$
g{\cdot}[W]=\rho(g){\cdot}[W]
$$
where $[W]\in H_n(W;\BZ)$ is the orientation class. We call this the \emph{orientation representation}.\plfn{the referee
asked in remark 17 for more explanation or a reference. It seems to me that all of this is very clear, so I do not know what to add.}

\begin{defin}\label{D:orientation-twisted}
Let $G$ be a finite group acting continuously  on a closed connected oriented manifold $W$ and let $A$ be
a $G$-$R$-DGmodule.
The \emph{orientation-twisted action} of $G$ on $\#A$ is
defined by
$$
\langle g{\cdot}\phi,a\rangle
:=\rho(g)\langle\phi,g^{-1}{\cdot}a\rangle\quad\quad\textrm{for }
g\in G,\phi\in\#A,a\in A,
$$
where $\rho$ is the orientation representation \refN{E:orrep}.
\end{defin}
In particular if $X\subset W$ is a $G$-invariant subspace of $W$ we have an induced action
on  $C^*(W)$ and $C^*(X)$, and we can consider the orientation-twisted dual action on
$\#C^*(W)$ and $\#C^*(X)$.
The reason for introducing this twisted action is that it is the
correct one to make the Poincar\'e and Lefschetz
duality quasi-isomorphisms equivariant as we will see in \refT{equLefschetz}.
In order to prove that theorem we need the following proposition. 
Recall the functor $\widehat C^*$  from \refS{functorsC*}.
\begin{prop}\label{P:equivariantPoincareduality}
Let $K$ be an ordered simplicial complex whose realization 
 $W:=|K|$  is a
closed oriented connected manifold of dimension
$n$. Let $G$ be a finite group acting on the left on $K$ and
let $\Bk$ be a field such that $\mathrm{char}(\Bk)$
does not divide $|G|$.
 Then there exists a $\Bk$-DGmodule morphism
$$\epsilon_K\colon \widehat C^*(K)\to s^{-n}\Bk$$
such that $\epsilon_K^*\colon H^n(W)\iso H^n(s^{-n}\Bk)=\Bk$ is an isomorphism
and such that the $
C^*(W)$-DG-module morphism
$$
\Phi_K\colon\widehat C^*(K)\quism s^{-n}\#\widehat C^*(K),
$$
defined by $\Phi_K(\alpha)(\beta)=\epsilon_K(\alpha{\cdot}\beta)$, is
$G$-equivariant when $\widehat C^*(K)$ is equipped with the
standard dual action of $G$ and $\#\widehat C^*(K)$ is equipped
with the orientation-twisted $G$-action.
\end{prop}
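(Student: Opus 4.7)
The plan is to construct $\epsilon_K$ by averaging a classical simplicial ``integration'' cochain over $G$, twisted by the orientation character $\rho$, an operation made legitimate by the hypothesis $|G|\in\Bk^\times$.

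First I would produce a non-equivariant candidate via classical simplicial Poincar\'e duality. Since $|K|$ is a closed oriented triangulated $n$-manifold, summing the top-dimensional simplices of $K_\bullet$ with signs determined by the orientation yields a fundamental cycle $[W]\in N_n(\BZ[K_\bullet])$. Define
$$\epsilon_K^0\colon\widehat C^*(K)\to s^{-n}\Bk,\qquad \epsilon_K^0(\gamma):=\gamma([W])$$
on $\widehat C^n(K)$, and extend by zero in other degrees. This is a chain map because $[W]$ is a cycle, and it induces an isomorphism on $H^n$ since it detects the orientation class. Classical simplicial Poincar\'e duality, transported through the natural quasi-isomorphism $C^*(|K|)\quism\widehat C^*(K)$ of \refS{functorsC*}, ensures that the map $\Phi_K^0(\alpha)(\beta):=\epsilon_K^0(\alpha{\cdot}\beta)$ is a quasi-isomorphism of $\widehat C^*(K)$-DGmodules.

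Next I would average. Set
$$\epsilon_K(\gamma):=\frac{1}{|G|}\sum_{g\in G}\rho(g)\,\epsilon_K^0(g{\cdot}\gamma).$$
Each term is a chain map since the $G$-action on $\widehat C^*(K)$ commutes with $d$, so $\epsilon_K$ is a chain map. Reindexing $g\mapsto gh^{-1}$ in the sum, together with $\rho(gh^{-1})=\rho(g)\rho(h)$ (since $\rho$ takes values in $\{\pm1\}$), yields the twisted equivariance
$$\epsilon_K(h{\cdot}\gamma)=\rho(h)\,\epsilon_K(\gamma).$$
Using multiplicativity of the $G$-action, namely $(h\alpha){\cdot}\beta=h{\cdot}(\alpha{\cdot}h^{-1}\beta)$, this at once gives
$$\Phi_K(h\alpha)(\beta)=\epsilon_K\bigl(h{\cdot}(\alpha{\cdot}h^{-1}\beta)\bigr)=\rho(h)\,\Phi_K(\alpha)(h^{-1}\beta)=\bigl(h{\cdot}\Phi_K(\alpha)\bigr)(\beta),$$
that is, $G$-equivariance of $\Phi_K$ with respect to the standard action on $\widehat C^*(K)$ and the orientation-twisted action on $\#\widehat C^*(K)$. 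The fact that $\Phi_K$ is a DGmodule morphism is immediate from associativity of the product.

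Finally I would check that the top-degree cohomology map and the quasi-isomorphism property of $\Phi_K$ survive the averaging. Since $G$ acts on $H^n(W;\Bk)\cong\Bk$ by the character $\rho$, if $\omega_0$ is the class dual to $[W]$ then $\epsilon_K^{0*}(g{\cdot}\omega_0)=\rho(g)$, and hence
$$\epsilon_K^*(\omega_0)=\frac{1}{|G|}\sum_{g\in G}\rho(g)^2=1.$$
At the cohomology level, $\Phi_K^*\colon H^i(W)\to\#H^{n-i}(W)$ sends $\alpha$ to the linear form $\beta\mapsto\epsilon_K^*(\alpha\cup\beta)$, which is the classical non-degenerate cup-product pairing on $H^*(W;\Bk)$ composed with the isomorphism $\epsilon_K^*$; so $\Phi_K$ is a quasi-isomorphism in every degree. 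The main obstacle is really only the bookkeeping: keeping the multiplicative $G$-action on $\widehat C^*(K)$, its dual, and the orientation twist aligned so that the averaging identity takes precisely the form needed. Once the conventions of \refS{groupactionsmodules} are in place, the proof reduces to classical Poincar\'e duality plus a standard $|G|^{-1}$-averaging.
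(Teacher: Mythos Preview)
Your proof is correct and follows essentially the same route as the paper: pick any chain map $\epsilon_K^0$ detecting the orientation class, average it against the orientation character $\rho$ using $|G|^{-1}\in\Bk$, and then read off the twisted equivariance of $\Phi_K$ from multiplicativity of the $G$-action; the paper's proof is the same argument stated more tersely. One small caveat: your explicit formula $\epsilon_K^0(\gamma)=\gamma([W])$ is literally well-defined only when $\widehat C^*(K)=\homset(N_*(\BZ[K_\bullet]),\Bk)$, whereas \refS{functorsC*} also allows $\widehat C^*(K)=\Apl(K_\bullet)$; in that case you should either compose with the integration map $\Apl(K_\bullet)\to\homset(N_*(\BZ[K_\bullet]),\Bk)$ or, as the paper does, simply invoke $H^n(W)\cong\Bk$ to get the existence of $\epsilon_K^0$.
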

\begin{proof}
Since $H^n(W)\cong\Bk$ there exists a chain map 
 $\epsilon'_K\colon\widehat C^*(K)\to s^{-n}\Bk$
 such that  $\epsilon'_K(\mu)=1$ for some cocycle $\mu$ representing the orientation class.
 Set
$\epsilon_K(\omega):=(1/|G|)\sum_{g\in
G}\rho(g){\cdot}\epsilon'_W(g{\cdot}\omega)$.
One computes that $\epsilon_K(\mu)=1$, so $\epsilon_K$ induces an isomorphism in homology.
One checks also that $\Phi_K$ is $G$-equivariant. The fact that $\Phi_K$ is a quasi-isomorphism is 
a consequence of Poincar\'e duality.
\end{proof}
We arrive to our cochain level equivariant Lefschetz duality theorem:
\begin{thm}\label{T:equLefschetz}
Let $W$ be an $n$-dimensional triangulated connected oriented
closed manifold.  Let $G$ be a finite group that acts simplicialy on
$W$. Let $f\colon X\hookrightarrow W$ be a
subpolyhedron stable by $G$.  Let $\Bk$ be a field such that
$\mathrm{char}(\Bk)$ does not divide $|G|$.
 Let  $C^*$ be the cochain algebra functor from \refS{functorsC*}.
Then there exists a
chain of weak equivalences of $G$-$C^*(W)$-DGmodules between
$C^*(W\smallsetminus X)$ and
$s^{-n}\left(\#C^*(W)\oplus_{\#C^*(f)}s\#C^*(X)\right)$, where
$\#C^*(W)$ and $\#C^*(X)$  are equipped
with the orientation-twisted dual $G$-action.
\end{thm}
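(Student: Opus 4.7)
The plan is to prove the theorem via an equivariant regular-neighborhood construction together with the strict excision of \refS{functorsC*} and an equivariant cochain-level Poincar\'e--Lefschetz duality for a manifold with boundary. First, I replace $C^*$ by $\widehat C^*$ throughout using the natural quasi-isomorphism $C^*(|{-}|)\quism\widehat C^*$ from \refS{functorsC*}, so that excision becomes a strict isomorphism. By Illman's equivariant triangulation theorem, and by passing to a second barycentric subdivision, we may realize $W$ as the geometric realization of a $G$-equivariant ordered simplicial complex in which $X$ is a full subcomplex. Let $N$ be the simplicial closed star of $X$, a $G$-invariant regular neighborhood of $X$, and set $U=\overline{W\smallsetminus N}$. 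Then $N\cap U=\partial N$, $N\cup U=W$, and there are $G$-equivariant deformation retracts $N\simeq X$ and $U\simeq W\smallsetminus X$; all four subpolyhedra inherit compatible $G$-invariant ordered simplicial structures.

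Next, I assemble the following chain of $G$-equivariant weak equivalences of $\widehat C^*(W)$-DGmodules:
\[
\widehat C^*(W\smallsetminus X)
\;\simeq\;\widehat C^*(U)
\;\simeq\;s^{-n}\#\widehat C^*(U,\partial N)
\;\cong\;s^{-n}\#\widehat C^*(W,N)
\;\simeq\;s^{-n}\#\widehat C^*(W,X)
\;\simeq\;s^{-n}\bigl(\#\widehat C^*(W)\oplus_{\#\widehat C^*(f)}s\#\widehat C^*(X)\bigr).
\]
The first and fourth equivalences come from the $G$-equivariant deformation retracts $U\simeq W\smallsetminus X$ and $N\simeq X$ (the latter after dualizing). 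The third is the strict excision isomorphism of \refS{functorsC*}, applied to the inclusion of pairs $(U,\partial N)\subset(W,N)$. The last equivalence follows from dualizing the equivariant short exact sequence $0\to\widehat C^*(W,X)\to\widehat C^*(W)\to\widehat C^*(X)\to 0$ and identifying $\#\widehat C^*(W,X)=\coker(\#\widehat C^*(f))$ with the mapping cone $C(\#\widehat C^*(f))$ via the standard cone-cokernel quasi-isomorphism.

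The crux is the second equivalence: an equivariant cochain-level Poincar\'e--Lefschetz duality $\Phi_U\colon\widehat C^*(U)\quism s^{-n}\#\widehat C^*(U,\partial N)$ for the manifold-with-boundary pair $(U,\partial N)$, with the orientation-twisted $G$-action on the dual as in \refD{orientation-twisted}. I would establish it by mimicking the proof of \refP{equivariantPoincareduality}: pick a fundamental relative cycle representing the orientation class of $(U,\partial N)$, build the associated cap-product duality map, and then average over $G$ with the character $\rho$ to force $G$-equivariance. Averaging only rescales the map on homology, where it is a quasi-isomorphism by classical Lefschetz duality for manifolds with boundary. An alternative I would consider is to double $U$ along $\partial N$ into a closed $G$-manifold $DU=U\cup_{\partial N}U$, apply \refP{equivariantPoincareduality} directly to $DU$, and extract the $(U,\partial N)$-duality from the equivariant Mayer--Vietoris decomposition $DU=U\cup_{\partial N}U$.

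The main obstacle is precisely this manifold-with-boundary duality step, since \refP{equivariantPoincareduality} as stated only handles closed manifolds. Once it is in hand, the remainder of the proof is bookkeeping: verifying that the $\widehat C^*(W)$-module structures and the $G$-actions (with the orientation twist introduced exactly upon passage to duals) are preserved along each link of the chain, and finally undoing the replacement $C^*\rightsquigarrow\widehat C^*$ at the endpoints via the natural quasi-isomorphism.
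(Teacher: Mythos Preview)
Your outline is correct and would work, but the paper takes a somewhat different and more economical route that sidesteps the ``main obstacle'' you identify.

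Instead of establishing a separate equivariant Lefschetz duality for the manifold-with-boundary $U=\overline{W\smallsetminus N}$, the paper dualizes the \emph{regular neighborhood} $T$ (your $N$) rather than the complement. The key trick is to define $\epsilon_T\colon\widehat C^*(T,\partial T)\to s^{-n}\Bk$ as the composite
\[
\widehat C^*(T,\partial T)\;\stackrel{\cong}{\longrightarrow}\;\widehat C^*(W,\overline{W\smallsetminus T})\;\hookrightarrow\;\widehat C^*(W)\;\stackrel{\epsilon_W}{\longrightarrow}\;s^{-n}\Bk,
\]
where the first map is strict excision and $\epsilon_W$ is the already $G$-averaged orientation form from \refP{equivariantPoincareduality}. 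The resulting $\Phi_T\colon\widehat C^*(T,\partial T)\to s^{-n}\#\widehat C^*(T)$, $\psi\mapsto(\tau\mapsto\epsilon_T(\psi\tau))$, is then automatically $G$-equivariant (with the orientation twist) because $\epsilon_W$ is; no new averaging or doubling is required. The paper then places $\Phi_W$ and $\Phi_T$ as the vertical arrows in a commutative ladder whose top row is the short exact sequence $0\to\widehat C^*(W,\overline{W\smallsetminus T})\to\widehat C^*(W)\to\widehat C^*(\overline{W\smallsetminus T})\to0$, and reads off the desired zigzag of mapping-cone quasi-isomorphisms directly.

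In short: your approach uses the duality $\widehat C^*(U)\simeq s^{-n}\#\widehat C^*(U,\partial N)$ for the complement, while the paper uses the ``opposite'' duality $\widehat C^*(T,\partial T)\simeq s^{-n}\#\widehat C^*(T)$ for the neighborhood. Yours is perhaps the more geometrically natural statement, but it forces you to redo the equivariant duality argument for a manifold with boundary. The paper's choice lets everything be inherited from the single closed-manifold duality of \refP{equivariantPoincareduality}, so the boundary case never has to be treated on its own.
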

\begin{proof}
As $W$ is triangulated, it is homeomorphic to the realization of an abstract simplicial complex.
Replace this simplicial complex by its second barycentric subdivision. The action of $G$ is still simplicial.
Moreover there is a natural structure of \emph{ordered} simplicial complex defined on the
barycentric subdivision as follows. Denote by $b(\sigma)$ the barycentre of a simplex $\sigma$ and
order the vertices of the barycentric subdivision by $b(\sigma)\leq b(\tau)$ if and only if
$\sigma\subset\tau$.\plfn{I added this paragraph to get easily \emph{ordered} simplicial complex}

For the rest of the proof, abusing  notation, we will denote  both
the manifold itself and the ordered simplicial
complex associated to this second barycentric subdivisions by the same
letter $W$, and similarly for other
subpolyhedra of $W$.\plfn{Referee in remark 22 asked to put this at the beginning of the proof but we need
first to introduce the 2nd barycentric subdivision since it is that one which will be denoted $W$}

 Let $T$ be the closure of the star of $X$ in the simplicial complex $W$.
 Since we have took the second subdivision,  $T$ is a regular neighborhood
of $X$, hence the inclusion  $i\colon X\hookrightarrow T$ 
is a $G$-equivariant homotopy equivalence.
  Since $G$ preserves $X$ it also preserves $T$ and
the boundary $\partial T$.
 Denote by $j\colon
T\hookrightarrow W$, $j_0\colon(T,\partial
T)\hookrightarrow(W,\overline{W\smallsetminus T})$, $i\colon
X\hookrightarrow T$ and $i'\colon \overline{W\smallsetminus T}
\hookrightarrow W $ the simplicial $G$-equivariant inclusions.

 Consider the functor $\widehat C^*$ defined
on ordered simplicial complexes in \refS{functorsC*}. By the strict
excision property we have an isomorphism
$$
\widehat C^*(j_0)\colon \widehat C^*(W,\overline{W\smallsetminus T})
\stackrel\cong\to  \widehat C^*(T,\partial T).
$$
Recall  the cochain
morphism
$$\epsilon_W\colon\widehat C^*(W)\quism s^{-n}\Bk$$
from \refP{equivariantPoincareduality} which induces the $G$-equivariant Poincar\'e duality
quasi-isomorphism
$$\Phi_W\colon\widehat C^*(W)\quism s^{-n}\#\widehat C^*(W)\,,\,\alpha\mapsto
\left(\Phi_W(\alpha)\colon\beta\mapsto\epsilon_W(\alpha\beta)\right).$$
Define $\epsilon_T$ as the composite
$$
\epsilon_T\colon \widehat C^*(T,\partial T)\stackrel{\widehat C^*(
j_0)}\cong\widehat C^*(W,\overline{W\smallsetminus
T})\stackrel{\iota}\to\widehat C^*(W)\stackrel{\epsilon_W}\to
s^{-n }\Bk.
$$
This cochain map serves to define a cochain morphism
$$\Phi_T\colon C^*(T,\partial T)\quism s^{-n}\#C^*(T)\,,\,\psi\mapsto
\left(\Phi_T(\psi)\colon\tau\mapsto\epsilon_T(\psi\tau)\right)
$$
which is a $G$-$C^*(W)$-DGmodule quasi-isomorphism.

Moreover, since $\epsilon_T$ is defined from $\epsilon_W$, the
following diagram of $G$-$C^*(W)$-DGmodules commutes
$$\xymatrix{
0\ar[r]&\widehat C^*(W,\overline{W\smallsetminus T})\ar[r]^-{\iota}
\ar[d]_-{\cong}^-{j^*_0}& \widehat
C^*(W)\ar[r]^-{i'^*}\ar[dd]_-{\simeq}^-{\Phi_W} & \widehat
C^*(\overline{W\smallsetminus T})\ar[r]&0\\
&\widehat C^*(T,\partial T)\ar[d]_-{\simeq}^-{\Phi_T}\\
&s^{-n}\#\widehat C^*(T)\ar[r]^-{s^{-n}\#j^*}&s^{-n}\#\widehat
C^*(W)\\
&s^{-n}\#\widehat
C^*(X)\ar[u]^-\simeq_-{s^{-n}\#i^*}\ar[r]^-{s^{-n}\#f^*}&s^{-n}\#\widehat
C^*(W).\ar@{=}[u] }$$
Therefore we have the following chain of
quasi-isomorphisms of $G$-$C^*(W)$-DGmodules \\
$\xymatrix@1{
&s^{-n}\left(\#\widehat C^*(W)\oplus_{\#f^*}s\#\widehat
C^*(X)\right)\ar[r]^-\simeq& s^{-n}\left(\#\widehat
C^*(W)\oplus_{\#j^*}s\#\widehat C^*(T)\right)&\ar[l]_-\simeq}$
\\
$\xymatrix@1{ &\ar[l]_-\simeq\widehat C^*(W)\oplus_{\iota}\widehat
C^*(W,\overline{W\smallsetminus T})\ar[r]^-\simeq_-{i'^*\oplus 0}&
\widehat C^*(\overline{W\smallsetminus T}).}$

The first and last terms of this zigzag of quasi-isomorphisms are
respectively 
 $s^{-n}\left(\# C^*(W)\oplus_{\#f^*}s\# C^*(X)\right)$
and
 $C^*(\overline{W\smallsetminus T})$. This proves the theorem.
\end{proof}
\section{Cubical diagrams and their total cofibres}
\label{S:cubical}
Let $E$ be a finite set and let
$\Gamma=(2^E)^{\operatorname{op}}$ be the category whose objects are subsets $\gamma$ of $E$ and a morphism
 $\gamma\to\gamma'$ is a reversed inclusion $\gamma\supset\gamma'$. The ``shape'' of this category
 is that of an $|E|$-dimensional cube with an initial object $E$ and a final object $\emptyset$.
\begin{defin}\label{D:cubicaldiag}
An $E$-cubical diagram in a category $\calC$ is a covariant functor
$$\barN\colon\Gamma\to\calC.$$
\end{defin}

For $\gamma\subset E$ we denote by $|\gamma|$ the cardinal of that
subset. If $e\in\gamma$ we set $\gamma\smallsetminus e:=\gamma\smallsetminus
\{e\}$.

Suppose a linear order  $\leq$ on $E$ has been given. For $e\in E$ and
$\gamma\in \Gamma$ we define the integer
$$\pos(e:\gamma):=|\{j\in\gamma:j\leq e\}|.$$
In other words if $\gamma=\{e_1,\ldots,e_l\}$ with $e_1<\cdots<e_l$ 
then $\pos(e_i:\gamma)=i$.

\begin{defin}\label{D:totalcofibre}
Let $R$ be a $\Bk$-DGA. Let $E$ be a finite set equipped with a
linear ordering. The \emph{total cofibre} of an
$E$-cubical diagram $\barN\colon\Gamma\to R\!-\!\DGMod$ of
$R$-DGmodules is the $R$-DGmodule
$$\TotCof(\barN):=\left(\oplus_{\gamma\in\Gamma}\,y_\gamma{\cdot}\barN(\gamma),D\right)
$$
where, for $\gamma\in\Gamma$, $x\in\barN(\gamma)$, $r\in R$,
\begin{itemize}
\item $y_\gamma$ is a variable of degree $-|\gamma|$;
\item $\deg(y_\gamma{\cdot}x)=-|\gamma|+\deg(x)$;
\item $r{\cdot}(y_\gamma{\cdot}x)=(-1)^{|\gamma|\deg(r)}y_\gamma{\cdot}(r{\cdot}x)$;
\item $D(y_\gamma{\cdot}x)=(-1)^{|\gamma|}y_\gamma{\cdot}(d(x))+
\sum_{e\in\gamma}y_{\gamma\smallsetminus
e}(-1)^{\pos(e:\gamma)}\barN(\gamma\to\gamma\smallsetminus e)(x).$
\end{itemize}
\end{defin}
The notion of  a total cofibre of a cubical diagram was first introduced by Goodwillie
in \cite{Goodwillie:Calculus2}. Actually it is a special case of the following more general definition,
see \cite{Hut:TC}. Let $\Gamma$ be a poset and $\Gamma'\subset \Gamma$ be a subposet and let
$X\colon \Gamma\to\calC$ be a covariant functor in some Quillen model
category. The total cofibre of $X$ is 
defined as the homotopy cofibre of the map
\[\hocolim\limits_{\gamma'\in\Gamma'}X(\gamma')\to\hocolim\limits_{\gamma\in\Gamma}X(\gamma).
\]
In our case $\calC$ is the category $R$-DGMod, $\Gamma=(2^E)^{\operatorname{op}}$ and $\Gamma'$ 
is $\Gamma$ without its final object $\emptyset$. 

Notice that the definition of the total cofibre depends on the
choice of a linear ordering on $E$ but is easy
 to check that two
such linear ordering give isomorphic total cofibres. (Hint: Use \refL{sgngpos}.)

We introduce the notion of an iterated mapping cone of a bounded
chain complex in $R$-DGMod, which extends the usual mapping
cone of a chain map. Let
$$M_*:=\{ M_r\stackrel{f_r}\to M_{r-1}\stackrel{f_{r-1}}\to\cdots
\stackrel{f_2}\to M_{1}\stackrel{f_{1}}\to M_0\}
$$
where $(M_i,d_i)$ and $f_i$ are objects and morphisms in $R$-DGMod,
for some DGA $R$, such that $f_if_{i+1}=0$.
The \emph{iterated mapping cone} of
$M_*$ is defined as
$$C(M_*):=\left(\oplus_{i=0}^rs^iM_i,D\right)$$
with $D(s^ix)=(-1)^is^i(d_i x)+s^{i-1}f_i(x)$, for $x\in M_i$. It
is straightforward to check that $D^2=0$.
When $r=1$ this
is the usual mapping cone of the map $f_1\colon M_1\to M_0$.

If $\barN\colon\Gamma\to\DGMod$ is an $E$-cubical diagram of
DGmodules with $|E|=r$ then we can define a bounded complex of
DGmodules
\begin{equation}\label{E:N*}
N_*:=\{
N_r\stackrel{f_r}\to N_{r-1}\stackrel{f_{r-1}}\to\cdots
\stackrel{f_2}\to N_{1}\stackrel{f_{1}}\to N_0\}
\end{equation}
with $$N_i:=\underset{\gamma\in\Gamma,|\gamma|=i}\oplus\,\barN(\gamma)$$
and, for $x\in \barN(\gamma)\subset N_i$,
$$f_i(x):=\sum_{e\in\gamma}(-1)^{\pos(e:\gamma)}\barN(\gamma\to\gamma\smallsetminus
e)(x).$$ Then it is straightforward to check that $N_*$ is a
complex of differential modules and that the total cofibre of the cube
$\barN$ coincides with the iterated mapping cone of $N_*$:
$$
\TotCof(\barN)\cong C(N_*).
$$
\section{$G$-action on a cubical diagram of $R$-DGmodules}
\label{S:Gcubical}
Let $E$ be a finite set equipped with an action of $G$. This
induces a $G$-action on the poset $\Gamma=2^E$ that preserves the
order (induced by reverse inclusions). Let $\barN\colon\Gamma\to
R\!-\!\DGMod$ be an $E$-cubical diagram $R$-DGmodules, where $R$ is a
$G$-$\Bk$-DGA.

By a \emph{$G$-action on $\barN$} we mean the data of $\Bk$-linear
morphisms
$$
\barN(g,\gamma)\colon\barN(\gamma)\to\barN(g{\cdot}\gamma),
$$
for each $g\in G$ and $\gamma\in\Gamma$ such that
\begin{description}
\item[$G$-naturality] the following diagrams commute
$$\xymatrix{\barN(\gamma)\ar[d]_-{\barN(g,\gamma)}\ar[r]^-{\barN(\gamma\to\gamma')}&
\barN(\gamma')\ar[d]^-{\barN(g,\gamma')}\\
\barN(g{\cdot}\gamma)\quad\ar[r]_-{\barN(g{\cdot}\gamma\to
g{\cdot}\gamma')}&\quad\barN(g{\cdot}\gamma')\quad;}
$$
\item[associativity]
$\barN(g',g{\cdot}\gamma)\barN(g,\gamma)=\barN(g'g,\gamma)$;
\item[unit]$\barN(1,\gamma)=\id$ where $1\in G$ is the identity;
\item[$G$-$R$-module] for $x\in\barN(\gamma)$ and $r\in R$,
$\barN(g,\gamma)(r{\cdot}x)=(g{\cdot}r){\cdot}\left(\barN(g,\gamma)(x)\right).$
\end{description}
\vspace{3mm}

For $g\in G$, $\gamma\in\Gamma$ and $x\in\barN(\gamma)$ we simply
write $g{\cdot}x$ for $\barN(g,\gamma)(x)$ when there is no possible
confusion. Then the associativity and unit axioms are the usual
axioms $(g'{\cdot}g){\cdot}x=g'{\cdot}(g{\cdot}x)$ and $1{\cdot}x=x$, and the $G$-$R$-module
axioms means that $g{\cdot}(r{\cdot}x)=(g{\cdot}r){\cdot}(g{\cdot}x)$. In particular the maps
$\barN(g,\gamma)$ are \emph{not} maps of $R$-DGmodule.

 Notice that if $G$
acts on the $E$-cube $\barN$ then in particular $G$ acts on the
$R$-DGmodule $\barN(\emptyset)$.

Suppose given such an $E$-cubical diagram $\barN$ of $R$-DGModules
equipped with a $G$-action as defined above. Fix a linear ordering
on $E$. Our goal is to define a $G$-action on the total
cofibre $\TotCof(\barN)$ making it a  $G$-$R$-DGmodule. Notice that the
``obvious'' action $g{\cdot}(y_\gamma{\cdot}x)=y_{g{\cdot}\gamma}{\cdot}\barN(g,\gamma)(x)$ 
for $\gamma\in\Gamma$, $g\in G$ and $x\in \barN(\gamma)$
is not the correct one because it does not make the differential
equivariant.

\begin{defin}
\label{D:sgn}Let $\phi\colon L\iso L'$ be an bijection between two finite 
linearly ordered set of cardinality $r\geq0$, non-necessarily order-preserving. We define its \emph{signature},
 $\sgn(\phi)$, as the signature  of the permutation in $\Sigma_r$
obtained as the composite
$$\{1,\cdots,r\}\stackrel{\psi}\to L\stackrel{\phi} \to L' \stackrel{\psi'}\to \{1,\cdots,r\}$$
where $\psi$ and $\psi'$ are the unique order-preserving bijections.\\
If $E$ is a linearly ordered finite set with an action of a finite group $G$
then for all subset $\gamma\subset E$
and all $g\in G$ the restriction to $\gamma$ gives 
 a bijection $g|\gamma\colon \gamma\iso g{\cdot}\gamma,e\mapsto g{\cdot}e$,
and we denote its signature by $\sgn(g:\gamma)$, where $\gamma$ and $g{\cdot}\gamma$ are equipped with the linear order
induced by $E$.
\end{defin}

\begin{lemma}\label{L:sgngpos}
Let $\gamma\in\Gamma=(2^E)^{\operatorname{op}}$, let $g\in G$ and let $e\in\gamma$. Then
$$\sgn(g:\gamma){\cdot}\sgn(g:\gamma\smallsetminus e)=(-1)^{\pos(e:\gamma)}{\cdot}(-1)^{\pos(g{\cdot}e:g{\cdot}\gamma)}.$$
\end{lemma}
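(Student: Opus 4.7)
The plan is to translate both sides of the identity into a statement about permutations of the sets $\{1,\dots,r\}$ and $\{1,\dots,r-1\}$, where $r=|\gamma|$, and then reduce it to the standard cofactor-type sign formula for deleting a row and column from a permutation.

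More precisely, let $r=|\gamma|$, and let $\psi\colon\{1,\dots,r\}\iso\gamma$ and $\psi'\colon\{1,\dots,r\}\iso g{\cdot}\gamma$ be the unique order-preserving bijections. Set
$$\pi\;:=\;\psi'^{-1}\circ (g|\gamma)\circ\psi\;\in\;\Sigma_r,$$
so that $\sgn(g:\gamma)=\sgn(\pi)$ by definition. Writing $i:=\pos(e:\gamma)$ and $j:=\pos(g{\cdot}e:g{\cdot}\gamma)$, one has $\psi(i)=e$ and $\psi'(j)=g{\cdot}e$, whence $\pi(i)=j$. The bijection obtained by restricting $g|\gamma$ to $\gamma\smallsetminus e\iso g{\cdot}\gamma\smallsetminus g{\cdot}e$ then gives, after the analogous order-preserving identifications with $\{1,\dots,r-1\}$, a permutation $\pi'\in\Sigma_{r-1}$ whose signature equals $\sgn(g:\gamma\smallsetminus e)$.

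The lemma will therefore follow at once from the elementary identity
$$\sgn(\pi)\;=\;(-1)^{i+j}\,\sgn(\pi')\qquad\text{whenever }\pi(i)=j.$$
To prove this, introduce the cyclic permutation $\tau_k\in\Sigma_r$ that fixes $\{1,\dots,k-1\}$ and sends $k\mapsto r\mapsto r-1\mapsto\cdots\mapsto k+1\mapsto k$; it is a cycle of length $r-k+1$, hence has sign $(-1)^{r-k}$. The conjugated permutation $\tilde\pi:=\tau_j\,\pi\,\tau_i^{-1}$ satisfies $\tilde\pi(r)=r$ and, by construction, restricts on $\{1,\dots,r-1\}$ to $\pi'$. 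Computing signs gives
$$\sgn(\pi)=\sgn(\tau_j)\sgn(\tilde\pi)\sgn(\tau_i)=(-1)^{r-j}(-1)^{r-i}\sgn(\pi')=(-1)^{i+j}\sgn(\pi'),$$
which is the desired identity.

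Multiplying both sides by $\sgn(g:\gamma\smallsetminus e)=\pm 1$ yields
$$\sgn(g:\gamma)\cdot\sgn(g:\gamma\smallsetminus e)=(-1)^{i+j}=(-1)^{\pos(e:\gamma)}(-1)^{\pos(g{\cdot}e:g{\cdot}\gamma)},$$
which is exactly the claim. The only delicate point is the cofactor-style sign formula for deletion of a matched pair $(i,j)$ from a permutation, and the cyclic-shift trick above handles it cleanly with no casework on the relative positions of $i$ and $j$.
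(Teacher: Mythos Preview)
Your proof is correct; the paper itself merely records ``Straightforward'' with no further argument. Your reduction to the cofactor-type identity $\sgn(\pi)=(-1)^{i+j}\sgn(\pi')$ for $\pi(i)=j$, together with the cyclic-shift verification, is a clean way to supply the omitted details, and the identification of $\tilde\pi|_{\{1,\dots,r-1\}}$ with $\pi'$ via the order-preserving maps $\sigma_i,\sigma_j$ is exactly what is needed.
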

\begin{proof}
Straightforward.
\end{proof}

Define an action of $G$ on the total cofibre, $\TotCof(\barN)$, of \refD{totalcofibre}
by
\begin{equation}
\label{E:gy}
g{\cdot}y_\gamma:=\sgn(g:\gamma){\cdot}y_{g{\cdot}\gamma}
\end{equation}
inducing
$$g{\cdot}(y_\gamma{\cdot}x):=(g{\cdot}y_\gamma){\cdot}(g{\cdot}x)=\sgn(g:\gamma){\cdot}y_{g{\cdot}\gamma}{\cdot}\barN(g,\gamma)(x).$$

\begin{prop}\label{P:actionTotCof}
The action defined above induces a $G$-$R$-DGmodule structure on
$\TotCof(\barN)$ such that the inclusion
$\barN(\emptyset)\hookrightarrow\TotCof(\barN)$ is
$G$-equivariant.
\end{prop}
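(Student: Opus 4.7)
The plan is to verify, one axiom at a time, that the formula $g\cdot(y_\gamma\cdot x)=\sgn(g:\gamma)\,y_{g\cdot\gamma}\cdot\barN(g,\gamma)(x)$ upgrades $\TotCof(\barN)$ to a $G$-$R$-DGmodule. The serious work is showing that the differential $D$ commutes with the $G$-action; everything else is essentially bookkeeping together with the signature identity of \refL{sgngpos}.

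First I would check that \refE{gy} defines an honest group action. Because the signature is multiplicative on composites of bijections, $\sgn(g'g:\gamma)=\sgn(g':g{\cdot}\gamma)\cdot\sgn(g:\gamma)$, and combining this with the associativity axiom $\barN(g',g{\cdot}\gamma)\barN(g,\gamma)=\barN(g'g,\gamma)$ gives $(g'g)\cdot(y_\gamma\cdot x)=g'\cdot(g\cdot(y_\gamma\cdot x))$. The unit axiom is immediate from $\sgn(1:\gamma)=1$ and $\barN(1,\gamma)=\id$. Next, for the $G$-$R$-module compatibility I would start from the definition $r\cdot(y_\gamma\cdot x)=(-1)^{|\gamma|\deg(r)}y_\gamma\cdot(r\cdot x)$, apply the $G$-action, and then use $|g{\cdot}\gamma|=|\gamma|$, $\deg(g{\cdot}r)=\deg(r)$, and the $G$-$R$-module axiom $\barN(g,\gamma)(r\cdot x)=(g{\cdot}r)\cdot\barN(g,\gamma)(x)$ to recognise the result as $(g{\cdot}r)\cdot(g\cdot(y_\gamma\cdot x))$.

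The central computation is the equivariance $D(g\cdot(y_\gamma\cdot x))=g\cdot D(y_\gamma\cdot x)$. Expanding using \refD{totalcofibre}, the internal-differential term $(-1)^{|\gamma|}y_\gamma\cdot d(x)$ transforms correctly because $\barN(g,\gamma)$ is a chain map and $|g{\cdot}\gamma|=|\gamma|$. The delicate part is the face-map term. Applying $g$ to $\sum_{e\in\gamma}(-1)^{\pos(e:\gamma)}y_{\gamma\smallsetminus e}\cdot\barN(\gamma\to\gamma\smallsetminus e)(x)$ yields
\[
\sum_{e\in\gamma}(-1)^{\pos(e:\gamma)}\sgn(g:\gamma\smallsetminus e)\,y_{g\cdot(\gamma\smallsetminus e)}\cdot\barN(g,\gamma\smallsetminus e)\bigl(\barN(\gamma\to\gamma\smallsetminus e)(x)\bigr).
\]
By $G$-naturality the last composite equals $\barN(g\cdot\gamma\to g\cdot\gamma\smallsetminus g{\cdot}e)\bigl(\barN(g,\gamma)(x)\bigr)$, and $g{\cdot}(\gamma\smallsetminus e)=g{\cdot}\gamma\smallsetminus g{\cdot}e$. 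Reindexing $e'=g{\cdot}e$ and applying \refL{sgngpos} in the form $(-1)^{\pos(e:\gamma)}\sgn(g:\gamma\smallsetminus e)=\sgn(g:\gamma)\cdot(-1)^{\pos(g\cdot e:g\cdot\gamma)}$ turns the sum into $\sgn(g:\gamma)\cdot\sum_{e'\in g\cdot\gamma}(-1)^{\pos(e':g\cdot\gamma)}y_{g\cdot\gamma\smallsetminus e'}\cdot\barN(g\cdot\gamma\to g\cdot\gamma\smallsetminus e')\bigl(\barN(g,\gamma)(x)\bigr)$, which is precisely the face-map part of $D\bigl(\sgn(g:\gamma)y_{g\cdot\gamma}\cdot\barN(g,\gamma)(x)\bigr)=D(g\cdot(y_\gamma\cdot x))$.

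Finally, the inclusion $\barN(\emptyset)\hookrightarrow\TotCof(\barN)$, $x\mapsto y_\emptyset\cdot x$, is equivariant because $g{\cdot}\emptyset=\emptyset$ and $\sgn(g:\emptyset)=1$ (the empty permutation), so $g\cdot(y_\emptyset\cdot x)=y_\emptyset\cdot(g\cdot x)$. The only genuine obstacle is the sign bookkeeping in the differential step, and \refL{sgngpos} is tailored exactly to eliminate it; the signature factor $\sgn(g:\gamma)$ in the action is inserted precisely to cancel the discrepancy between $\pos(e:\gamma)$ and $\pos(g\cdot e:g\cdot\gamma)$.
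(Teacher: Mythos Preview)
Your proof is correct and follows exactly the approach the paper indicates: the paper's proof consists of the single sentence ``Use \refL{sgngpos} to prove that the differential is equivariant,'' and you have simply fleshed out that computation together with the routine verifications of the remaining axioms. In particular, your identification of \refL{sgngpos} as precisely the identity that reconciles the signs $(-1)^{\pos(e:\gamma)}$ and $(-1)^{\pos(g\cdot e:g\cdot\gamma)}$ in the face-map term is the heart of the matter.
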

\begin{proof}
Use \refL{sgngpos} to prove that the differential is equivariant.
\end{proof}

\section{Equivariant Lefschetz theorem for a system of subpolyhedra}
\label{S:equivLefschetzSystPolyh}

Let $W$ be a triangulated space. Let $E$ be a finite set and let
$$\{j_e\colon X_e\hookrightarrow W\}_{e\in E}
$$
be a collection of subpolyhedra  indexed by $e\in E$.

Recall from \refS{cubical} the category $\Gamma=(2^E)^{\operatorname{op}}$. For
$\emptyset\not=\gamma\in\Gamma$ set
$$X_\gamma:=\cap_{e\in\gamma}X_e$$
and set 
$$X_\emptyset:=W.$$
This defines a cubical diagram $X_\bullet\colon\Gamma\to\Top\,,\,\gamma\mapsto X_\gamma$,
with the reversed inclusion $\gamma\supset\gamma'$ sent to the inclusion $X_\gamma\hookrightarrow X_{\gamma'}$.

Each $C^*(X_\gamma)$ is a \emph{right} $C^*(W)$-DGmodule,
therefore its dual $\#C^*(X_\gamma)$ is a left $C^*(W)$-DGmodule.
Moreover if $\gamma\supset \gamma'$, the inclusion map
$X_\gamma\hookrightarrow X_{\gamma'}$ induces a morphism
$$
\#C^*(X_\gamma)\to\#C^*(X_{\gamma'}).
$$
In other words we have an $E$-cubical diagram of $C^*(W)$-DGmodules
$$
\#C^*(X_\bullet)\colon\Gamma\to C^*(W)\!-\!\DGMod\,,\,\gamma\mapsto
\#C^*(X_\gamma).
$$
Fix a linear ordering on $E$ and consider the total
cofibre of $\#C^*(X_\bullet)$. The following is a folklore fact:
\begin{prop}\label{P:acyclictotcof}
With the setting above, if $W=\cup_{e\in E}X_e$ then the total
cofibre $\TotCof(\#C^*(X_\bullet))$ is acyclic.
\end{prop}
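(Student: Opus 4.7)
My strategy is to recognise $\TotCof(\#C^*(X_\bullet))$ as the $\Bk$-dual of the totalisation of a generalised \v{C}ech--Mayer--Vietoris complex augmented over $C^*(W)$, and to prove its acyclicity by decomposing this complex simplex by simplex.

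First, using the natural quasi-isomorphism $C^*(|K|)\quism\widehat{C}^*(K)$ of \refS{functorsC*}, I would replace $C^*$ throughout the cubical diagram $X_\bullet$ by the strict-excision variant $\widehat{C}^*$. The substitution is natural in the simplicial inclusions appearing in the cube, so the quasi-isomorphism type of $\TotCof$ is preserved. The point of doing this is that each $\widehat{C}^*(X_\gamma)$ is then literally the product $\prod_{\sigma\subset X_\gamma}V_\sigma$ indexed by simplices $\sigma$ of $X_\gamma$, where $V_\sigma=\Bk$ in the singular case and $V_\sigma=(\Apl^*)_{\deg\sigma}$ in the Sullivan case, and each restriction $\widehat{C}^*(X_{\gamma'})\to\widehat{C}^*(X_\gamma)$ is the obvious projection onto the factors indexed by simplices contained in $X_\gamma$.

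By the identification of the total cofibre with an iterated mapping cone in \refS{cubical}, $\TotCof(\#\widehat{C}^*(X_\bullet))$ is the totalisation of a bicomplex whose horizontal differential is the dual \v{C}ech operator (alternating sum with signs dictated by $\pos(e:\gamma)$) and whose vertical differential is the internal cochain differential. Since $W$ is a finite polyhedron, every piece is of finite type in each degree, so over the field $\Bk$ acyclicity of this bicomplex is equivalent to that of its predual, the augmented \v{C}ech-type complex
$$0\to\widehat{C}^*(W)\to\bigoplus_{|\gamma|=1}\widehat{C}^*(X_\gamma)\to\cdots\to\bigoplus_{|\gamma|=r}\widehat{C}^*(X_\gamma)\to 0,$$
whose horizontal differential is the alternating sum of restrictions.

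Now I would decompose this predual bicomplex simplex by simplex. For each simplex $\sigma$ of $W$, set $E(\sigma):=\{e\in E:\sigma\subset X_e\}$; the hypothesis $W=\cup_{e\in E}X_e$ ensures $E(\sigma)\neq\emptyset$. A simplex $\sigma$ contributes a copy of $V_\sigma$ to the $\gamma$-summand precisely when $\gamma\subset E(\sigma)$, and the horizontal differential acts on this $\sigma$-contribution by the signed copy maps into positions $\gamma\cup\{e\}$ for $e\in E(\sigma)\smallsetminus\gamma$. After factoring out $V_\sigma$, the $\sigma$-part is exactly the augmented simplicial cochain complex of the abstract simplex on the vertex set $E(\sigma)$, which is acyclic because this simplex is contractible and nonempty. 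Hence the horizontal direction is exact in every internal degree, and the \v{C}ech-first spectral sequence of the bicomplex (which converges because the \v{C}ech direction is bounded) gives $E_1=0$, and therefore acyclicity of the totalisation. The main delicate point will be matching the alternating-sum signs from $\pos(e:\gamma)$ with the standard simplicial coboundary signs on the simplex $\Delta^{E(\sigma)}$, which is a routine verification from the definitions.
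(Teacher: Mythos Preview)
Your approach is correct in outline and takes a genuinely different route from the paper. The paper argues by induction on $|E|$: it splits off one element $a\in E$, applies the inductive hypothesis to the two smaller systems $\{X_e\}_{e\in E\smallsetminus\{a\}}$ and $\{X_a\cap X_e\}_{e\in E\smallsetminus\{a\}}$, and glues the results together via ordinary Mayer--Vietoris for $W=W_0\cup X_a$, through a rather laborious diagram chase of iterated mapping cones. Your simplex-by-simplex decomposition of the augmented \v{C}ech complex is more direct and conceptual: it identifies acyclicity with the contractibility of the abstract simplex on the nonempty vertex set $E(\sigma)$, all at once, with no induction. The reduction ``acyclic $\Leftrightarrow$ dual acyclic'' is fine over the field $\Bk$ (exactness of $\homset(-,\Bk)$), and the interchange of the finite sum over $\gamma$ with the product over $\sigma$ causes no trouble since $E$ is finite.

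One point does need care, however. Your claim that $\widehat{C}^*(X_\gamma)$ is \emph{literally} the product $\prod_{\sigma\subset X_\gamma}V_\sigma$ is correct when $C^*=S^*(-;\Bk)$ (normalized simplicial cochains are arbitrary $\Bk$-valued functions on simplices), but it fails when $C^*=\Apl$: an element of $\Apl((X_\gamma)_\bullet)$ is a family $(\omega_\sigma)_\sigma$ subject to face-compatibility constraints, so $\widehat{C}^*(X_\gamma)$ is only a proper subspace of $\prod_\sigma(\Apl^*)_{\dim\sigma}$, and your splitting does not apply as stated. The fix is easy and does not disturb your strategy: either (i) reduce to the $S^*$ case via the natural zigzag of quasi-isomorphisms between $\Apl$ and simplicial cochains, which is compatible with all the maps in the cube; or (ii) filter each $\widehat{C}^*(X_\gamma)$ by the subspaces of forms vanishing on the $(p{-}1)$-skeleton and observe that the associated graded $F_{p-1}/F_p$ \emph{is} a genuine product $\prod_{\dim\sigma=p}J_p$ (with $J_p\subset(\Apl^*)_p$ the forms vanishing on $\partial\Delta^p$), to which your argument applies verbatim; the skeletal filtration is bounded, so exactness of the associated graded \v{C}ech complex gives exactness of the original. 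With either adjustment your proof goes through.
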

\begin{proof}
When\plfn{I was lazy in the previous version but since the referee ask for it here is the  long proof of
this quite of folklore fact}  $|E|\leq1$ the proposition is trivial and for $|E|=2$ it is exactly Mayer-Vietoris theorem. We prove the general case by an induction
on the cardinality of $E$. Suppose that the proposition has been proved for $|E|\leq k$ and let
$E=E_0\cup\{a\}$ with $|E_0|=k$. Set $\Gamma_0=(2^{E_0})^{\operatorname{op}}$,
 $W_0=\cup_{e\in E_0}X_e$, $\Gamma=(2^{E})^{\operatorname{op}}$, and
  $W=\cup_{e\in E}X_e$.

Consider the three systems of subpolyhedra
$\{X_e\}_{e\in E_0}$, $\{X_e\}_{e\in E}$,
and $\{X_a\cap X_e\}_{e\in E_0}$.
The corresponding  total cofibres of the  associated diagrams,
$\TotCof(\{\#C^*(X_\gamma)\}_{\gamma\in\Gamma_0})$,
$\TotCof(\{\#C^*(X_\gamma)\}_{\gamma\in\Gamma})$, and
$\TotCof(\{\#C^*(X_a\cap X_\gamma)\}_{\gamma\in\Gamma_0})$,
are obtained as the iterated mapping cones of the bounded chain complexes $A'_*$, $A_*$, and $A''_*$
defined as follows: For $r\geq 1$, we have
\begin{eqnarray*}
A'_r&=&\oplus_{\gamma\in\Gamma_0,|\gamma|=r}\,\#C^*(X_\gamma),\\
A_r&=&\oplus_{\gamma\in\Gamma,|\gamma|=r}\,\#C^*(X_\gamma),\\
A''_r&=&\oplus_{\gamma\in\Gamma_0,|\gamma|=r}\,\#C^*(X_a\cap X_\gamma),
\end{eqnarray*}
and
\begin{eqnarray*}
A'_0&=&\#C^*(W_0),\\
A_0&=&\#C^*(W),\\
A''_0&=&\#C^*(X_a\cap W_0).
\end{eqnarray*}
For $r\geq 2$ we have obvious short exact sequences
\begin{equation}\label{E:exAr}
0\to A'_r\to A_r\to A''_{r-1}\to 0
\end{equation}
as well as a short exact sequence
\begin{equation}\label{E:exA1}
0\to A'_1\to A_1\to \#C^*(X_a)\to 0.
\end{equation}
Mayer-Vietoris theorem for $W=W_0\cup X_a$ implies that the commutative square
\[
\xymatrix{A''_0\ar[r]\ar[d]_{q_0}&\#C^*(X_a)\ar[d]^q\\
A'_0\ar[r]_i&A_0
}
\]
induces a quasi-isomorphism between the mapping cones of the two horizontal arrows of this square 
\begin{equation}\label{E:MVqi}
q\oplus sq_0\colon\#C^*(X_a)\oplus sA''_0\quism A_0\oplus_isA'_0.
\end{equation}
The bulk of the proof is the study of the following commutative diagram of DGmodules
\begin{equation}\label{E:diagMV}
\xymatrix{
0\ar[r]\ar[d]&A'_k\ar[r]\ar[d]&\cdots\ar[r]&A'_2\ar[r]\ar[d]&A'_1\ar[r]\ar[d]&A'_0\ar[d]^i\\
A_{k+1}\ar[r]\ar[d]^{=}&A_k\ar[r]\ar[d]&\cdots\ar[r]&A_2\ar[r]\ar[d]&A_1\ar[r]\ar[d]&A_0\ar@{^(->}[d]\\
A''_{k}\ar[r]&A''_{k-1}\ar[r]&\cdots\ar[r]&A''_1\ar[r]^-p&\#C^*(X_a)\ar[r]^q&A_0\oplus_isA'_0.
}
\end{equation}
where portions of  the horizontal lines are  the chain complexes $A'_*$, $A_*$, and $A''_*$,
$p$ is the composite of the map $A''_1\to A''_0=\#C^*(X_a\cap W_0)$
with the map $\#C^*(X_a\cap W_0)\to \#C^*(X_a)$ induced by the inclusion
$X_a \cap W_0\hookrightarrow X_a$, and the vertical arrows are short exact sequences
\refN{E:exAr} and \refN{E:exA1} except for the rightmost which is the obvious sequence of the mapping cone of $i\colon A'_0\to A_0$.

For the sake of the proof we say that a bounded
chain complex of DGmodules is \emph{quasi-exact} if its iterated mapping cone
is acyclic. We need to prove that the middle horizontal line of Diagram \refN{E:diagMV}
is quasi-exact.

Each vertical sequence in  \refN{E:diagMV} is quasi-exact because it is either a short exact sequence or
it is the sequence of a mapping cone. The top horizontal line $A'_*$ is quasi-exact by
the  induction hypothesis applied to the system
$\{X_e\}_{e\in E_0}$. We claim that the bottom horizontal line is also quasi-exact.
Indeed by induction hypothesis $A''_*$ is quasi-exact. Therefore the iterated mapping cone of the truncated
bounded chain complex $\{A''_k\to\dots\to A''_1\}$ is quasi-isomorphic to $A''_0$.
By \refN{E:MVqi} we deduce a quasi-isomorphism
\[
\#C^*(X_a)\oplus sC(\{A''_k\to\dots\to A''_1\})\,\quism\,A_0\oplus_isA'_0
\]
which implies the claim.

We can take the iterated mapping cone
 of each vertical sequences in Diagram \refN{E:diagMV} and then take the
 iterated mapping cone of the horizontal chain complexes obtained from these iterated mapping cones.
We get an acyclic DG-module because each term the horizontal complex of iterated mapping cones is acyclic
since the vertical sequences are quasi-exact.
 Working in the opposite order we can first take the iterated mapping cones of each of the three horizontal lines of
 Diagram \refN{E:diagMV}, then take the iterated mapping of the resulting  chain complex  of these three iterated
 mapping cones.
 This iterated mapping cone is also acyclic
 because the result is independent on the order between the horizontal and vertical directions.
 Moreover we have proved that the iterated mapping cone of the top and the bottom horizontal lines are acyclic.
 Therefore the iterated mapping cone of
 the middle horizontal line is also acyclic.
\end{proof}

 Suppose that  $W$ is an
oriented connected closed manifold of dimension $n$.
Let $G$ be a finite  group acting continuously on  $W$.
Suppose that $G$ also acts on the set $E$ in such a way that
$g{\cdot}(X_e)=X_{g{\cdot}e}$ for $g\in G$ and $e\in E$. This induces a
$G$-action on the $E$-cubical diagram $\barN:=\#C^*(X_\bullet)$ as
follows. Recall the orientation representation $\rho$ of  \refN{E:orrep}.
 For $g\in G$ and $\gamma\in \Gamma$ define a
morphism
\begin{equation}
\label{E:NtwG}
\barN(g,\gamma)\colon\#C^*(X_\gamma)\to\#C^*(X_{g{\cdot}\gamma})
\end{equation}
as the morphism induced through $\#C^*$ by the continuous map
$g\colon X_\gamma\to X_{g{\cdot}\gamma}$ multiplied by the sign
$\rho(g)$. This $G$-action on the cube $\#C^*(X_\bullet)$ is
called the \emph{orientation-twisted} action. It is
straightforward to check that it defines an action of $E$-cube of
$C^*(W)$-DGModules.

\begin{thm}
\label{T:EquivLefschSystem} Let $W$ be a triangulated oriented
connected closed manifold of dimension $n$. Let $E$ be a finite
set and let
$$\{j_e\colon X_e\hookrightarrow W\}_{e\in E}
$$
be a collection of subpolyhedra  indexed by $e\in E$. Let $G$ be a
finite group acting continuously on the manifold $W$. Suppose that
$G$ also acts on the set $E$ in such a way that $g{\cdot}(X_e)=X_{g{\cdot}e}$
for $g\in G$ and $e\in E$. Let $\Bk$ be a field and assume that
$\mathrm{char}(\Bk)$ does not divide $|G|$. Let $C^*$ be
the algebra cochain  functor of \refS{functorsC*}.

Consider the $E$-cubical diagram of $C^*(W)$-DGmodules
$$\#C^*(X_\bullet)\colon\Gamma=(2^E)^{\textrm{\emph{op}}}\to C^*(W)\!-\!\DGMod$$
defined above equipped with the orientation-twisted $G$-action
\refN{E:NtwG} and
consider the induced action on its total cofibre as in
\refP{actionTotCof}.

Then there is a chain of weak equivalences of
$G$-$C^*(W)$-DGmodules between
\begin{itemize}
\item[(i)] $C^*(W\smallsetminus\cup_{e\in E}X_e)$ and
\item[(ii)] $s^{-n}\TotCof(\#C^*(X_\bullet))$.
\end{itemize}
\end{thm}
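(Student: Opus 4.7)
My plan is to combine the cochain-level equivariant Lefschetz duality of \refT{equLefschetz} with the generalized Mayer--Vietoris principle of \refP{acyclictotcof}. Set $X:=\cup_{e\in E}X_e$. Since $g{\cdot}X_e=X_{g{\cdot}e}$ for every $g\in G$, the subpolyhedron $X\subset W$ is $G$-invariant, so \refT{equLefschetz} applied to the inclusion $f\colon X\hookrightarrow W$ supplies a chain of weak equivalences of $G$-$C^*(W)$-DGmodules
$$
C^*(W\smallsetminus X)\;\simeq\;s^{-n}\bigl(\#C^*(W)\oplus_{\#C^*(f)}s\#C^*(X)\bigr),
$$
with the duals carrying the orientation-twisted action. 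It thus suffices to identify this mapping cone equivariantly with $s^{-n}\TotCof(\#C^*(X_\bullet))$.

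To achieve this, decompose the total cofibre of $\#C^*(X_\bullet)$ as $y_\emptyset{\cdot}\#C^*(W)\oplus L$, where $L:=\bigoplus_{\gamma\neq\emptyset}y_\gamma{\cdot}\#C^*(X_\gamma)$ with the differential inherited from $\TotCof$. Inspection of the differential $D$ from \refD{totalcofibre} shows that $L$ is a sub-DGmodule up to an explicit $G$-equivariant chain map $\phi\colon L\to\#C^*(W)$ assembled from the restriction maps $\#C^*(X_e)\to\#C^*(W)$, so that $\TotCof(\#C^*(X_\bullet))$ is exactly the mapping cone of $\phi$. The claim will therefore follow once we produce an equivariant quasi-isomorphism $\phi^X\colon L\quism\#C^*(X)$ such that $\phi=\#C^*(f)\circ\phi^X$.

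For this, introduce the modified cube $X_\bullet^X$ obtained from $X_\bullet$ by replacing only the terminal value $X_\emptyset=W$ by $X_\emptyset^X=X$; all other values and internal maps are unchanged. Because $X=\cup_{e\in E}X_e$ by construction, \refP{acyclictotcof} applies to $X_\bullet^X$ and gives that $\TotCof(\#C^*(X_\bullet^X))$ is acyclic. But this total cofibre is exactly the mapping cone of a chain map $\phi^X\colon L\to\#C^*(X)$ produced in the same manner as $\phi$; acyclicity therefore forces $\phi^X$ to be a quasi-isomorphism. The inclusion $X\hookrightarrow W$ induces a map of cubes $X_\bullet^X\to X_\bullet$ (identity off $\emptyset$, $f$ on $\emptyset$), from which one reads off the factorization $\phi=\#C^*(f)\circ\phi^X$. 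Passing to mapping cones gives a natural quasi-isomorphism $\TotCof(\#C^*(X_\bullet))=C(\phi)\simeq C(\#C^*(f))=\#C^*(W)\oplus_{\#C^*(f)}s\#C^*(X)$, which desuspended by $s^{-n}$ completes the zigzag.

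The principal obstacle is verifying that every step is genuinely $G$-equivariant, given that the action on $\TotCof$ carries both the orientation-twisting sign $\rho(g)$ from \refN{E:NtwG} and the combinatorial sign $\sgn(g:\gamma)$ from \refD{sgn}. The compatibility of these signs with the differential $D$ is precisely what \refL{sgngpos} secures in \refP{actionTotCof}, and the same bookkeeping shows that the identification of $\TotCof$ with a mapping cone and the comparison map $X_\bullet^X\to X_\bullet$ are equivariant. A secondary technical point is that \refP{acyclictotcof} is stated without a $G$-action, but since $\phi^X$ is manifestly equivariant, acyclicity of its cone makes it automatically an equivariant quasi-isomorphism. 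Once these verifications are in place, the assembly described above yields the desired chain of weak equivalences of $G$-$C^*(W)$-DGmodules.
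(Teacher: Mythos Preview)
Your proposal is correct and follows essentially the same route as the paper. Both arguments apply \refT{equLefschetz} to $X=\cup_{e\in E}X_e$, introduce the auxiliary cube with $X$ in place of $W$ at the terminal vertex (your $X_\bullet^X$, the paper's $\barN'$), and invoke \refP{acyclictotcof} on it; the only cosmetic difference is that the paper packages the comparison via a third cube $\barN''$ (the objectwise cone of $\barN'\to\barN$) and a long exact sequence of total cofibres, whereas you identify $\TotCof$ directly as a mapping cone and use the factorization $\phi=\#C^*(f)\circ\phi^X$ to compare cones.
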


\begin{proof}
We first construct a short  sequence of $E$-cubical diagrams of
$G$-$C^*(W)$-DGmodules
\begin{equation} \label{E:N'NN"}
\xymatrix{ 0\ar[r]&\barN'\ar[r]^-\mu&\barN\ar[r]&\barN''\ar[r]&0.}
\end{equation}
 Set $\barN(\gamma):=\#C^*(X_\gamma)$ with in particular
$\barN(\emptyset)=\#C^*(W)$. Define $\barN'$ exactly as $\barN$
except that $\barN'(\emptyset)=\#C^*(\cup_{e\in E}X_e)$. The
inclusion $f\colon \cup_{e\in E}X_e\hookrightarrow W$ induces a map
$\mu(\emptyset)\colon \barN'(\emptyset)\to \barN(\emptyset)$
which combined with the identity maps, $\mu(\gamma)=\id$ for
$\gamma\not=\emptyset$, gives a morphism of $E$-cubical diagram
$$
\mu=\{\mu(\gamma)\}_{\gamma\in\Gamma}\colon\barN'\to\barN.
$$
Let $\barN''$ be the objectwise mapping cone of $\mu$, that is
$$
\barN'':=\{\barN(\gamma)\oplus_{\mu(\gamma)}
s\barN'(\gamma)\}_{\gamma\in\Gamma}.
$$
For $\gamma\not=\emptyset$, $\barN''(\gamma)$ is acyclic because it
is the mapping cone of the identity map. Therefore the total
cofibre of $\barN''$ is quasi-isomorphic to
$$
\barN''(\emptyset)=\#C^*(W)\oplus_{\#f^*}s\#C^*(\cup_{e\in E}X_e).
$$
By \refT{equLefschetz} we deduce that $C^*(W\smallsetminus \cup_{e\in
E}X_e)$ is weakly equivalent to $s^{-n}\TotCof(\barN'')$, and this
weak equivalence is $G$-equivariant by \refP{actionTotCof}.

On the other hand, since $\barN''$ is the mapping cone of
$\barN'\to\barN$, the short complex of cubes of differential
modules \refequ{E:N'NN"} induces  a long exact sequence between the
homologies of these cubes. Therefore it also induces a long exact
sequence between the homology of their total cofibres. Moreover by
\refP{acyclictotcof} the total cofibre of $\barN'$ is acyclic,
hence we deduce that $s^{-n}\TotCof(\barN'')$ and
$s^{-n}\TotCof(\barN)$ are weakly equivalent. We have shown above
that the first one is weakly equivalent to (i), and the second one
is (ii).
\end{proof}

\section{Models for configuration spaces}\label{S:modelconf}
We come now to the proof of our  main result. 

Fix an integer $k\geq1$ and let $M$ be  a closed oriented triangulated manifold   of dimension
$m$. Consider the  action of the symmetric
group $\Sigma_k$ on $M^k $ and on the configuration space $F(M,k)$
by permutation of the factors.
Let $A=(A,d)$ be a connected Poincar\'e duality CDGA
weakly equivalent to $\Apl(M)$ and suppose given quasi-isomorphisms of CDGA
\[
\xymatrix{\Apl(M)&\ar[l]_-\simeq R\ar[r]^-\simeq&A}
\]
(recall that this exists by the main result of \cite{LS:PDCDGA-x2}.)
The inclusion $F(M,k)\hookrightarrow M^k$ induces a structure of $\Sigma_k$-$\Apl(M^k)$-DGmodules, hence of $\Sigma_k$-$R^{\otimes k}$-DGmodules,
on $\Apl(F(M,k))$. There is also an obvious structure of $\Sigma_k$-$R^{\otimes k}$-DGmodules on the CDGA $F(A,k)$ of \refD{FAk}.
\begin{thm}\label{T:main} With the above setting, 
there is a weak equivalence of $\Sigma_k$-$R^{\otimes k}$-DGmodules between
 $\Apl(F(M,k))$ and $F(A,k)$.
\end{thm}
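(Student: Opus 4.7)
The plan is to apply the equivariant Lefschetz theorem for a system of subpolyhedra (\refT{EquivLefschSystem}) to $W=M^k$ of dimension $n=mk$ and the family of partial diagonals
\[
\Delta_{ij}=\{(x_1,\ldots,x_k)\in M^k:x_i=x_j\},\quad 1\le i<j\le k,
\]
whose complement is precisely $F(M,k)$. With $E=\{(i,j):1\le i<j\le k\}$, the group $\Sigma_k$ acts on $E$ and on $M^k$ compatibly, and the associated orientation representation is $\rho=\sgn^m$. After refining a $\Sigma_k$-equivariant triangulation of $M$ to a prismatic triangulation of $M^k$ in which each $\Delta_{ij}$ is a subcomplex, \refT{EquivLefschSystem} yields a chain of weak equivalences of $\Sigma_k$-$\Apl(M^k)$-DGmodules
\[
\Apl(F(M,k))\;\simeq\;s^{-mk}\TotCof\bigl(\#\Apl(X_\bullet)\bigr),
\]
where $X_\gamma:=\bigcap_{(i,j)\in\gamma}\Delta_{ij}$ and $X_\emptyset:=M^k$, equipped with the orientation-twisted $\Sigma_k$-action.

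I would next construct an algebraic cubical model $\barA_\bullet\colon\Gamma\to R^{\otimes k}\!-\!\DGMod$ of $\Apl(X_\bullet)$. Viewing each $\gamma$ as a graph on $\{1,\ldots,k\}$ with $p(\gamma)$ connected components, one has a canonical identification $X_\gamma\cong M^{p(\gamma)}$. Set $\barA(\gamma):=A^{\otimes p(\gamma)}$, with $R^{\otimes k}$-structure given by the composite $R^{\otimes k}\to R^{\otimes p(\gamma)}\to A^{\otimes p(\gamma)}$ in which the first map multiplies together the $R$-factors indexed by each component, and with face maps $\barA(\gamma)\to\barA(\gamma')$ (for $\gamma\supset\gamma'$) induced by the multiplication $A\otimes A\to A$, modelling the diagonal inclusion $X_\gamma\hookrightarrow X_{\gamma'}$. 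A Künneth argument combined with the zigzag $A\leftquism R\quism\Apl(M)$ yields a $\Sigma_k$-equivariant termwise weak equivalence of cubical $R^{\otimes k}$-DGmodules between $\barA_\bullet$ and $\Apl(X_\bullet)$.

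Dualising and passing to total cofibres (which preserves termwise quasi-isomorphisms, by the standard spectral sequence of the filtration by $|\gamma|$) gives
\[
s^{-mk}\TotCof\bigl(\#\Apl(X_\bullet)\bigr)\;\simeq\;s^{-mk}\TotCof\bigl(\#\barA_\bullet\bigr)
\]
equivariantly. Each $\barA(\gamma)\cong A^{\otimes p(\gamma)}$ is a Poincar\'e duality CDGA of formal dimension $mp(\gamma)$, so duality gives an $R^{\otimes k}$-DGmodule isomorphism $\#\barA(\gamma)\cong s^{mp(\gamma)}\barA(\gamma)$. It remains to define an explicit equivariant map
\[
\Psi\colon F(A,k)\longrightarrow s^{-mk}\TotCof(\#\barA_\bullet)
\]
sending each monomial $a\cdot g_{i_1 j_1}\cdots g_{i_l j_l}$ with $\gamma=\{(i_1,j_1),\ldots,(i_l,j_l)\}$ to the element $y_\gamma\otimes\overline{a}$, where $\overline{a}\in\#\barA(\gamma)\cong s^{mp(\gamma)}\barA(\gamma)$ is the Poincar\'e dual of the image of $a\in A^{\otimes k}$ in $A^{\otimes p(\gamma)}$ under component-wise multiplication, and to show that $\Psi$ is a quasi-isomorphism.

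The main obstacle is this last step. I expect the relations defining $F(A,k)$ to arise from the total-cofibre structure as follows: the symmetry relation $(\pi_i^*(a)-\pi_j^*(a))g_{ij}\in I$ reflects that on $\Delta_{ij}$ the $i$-th and $j$-th copies of $A$ are identified by multiplication; $g_{ij}^2=0$ is automatic since $\gamma$ is a set rather than a multi-set; and the Arnold relation appears because, for a triangle $\gamma=\{(i,j),(j,l),(i,l)\}$, the polyhedron $X_\gamma$ coincides with $X_{\gamma'}$ for each two-edge subtree $\gamma'\subset\gamma$, so the differential of the $y_\gamma$-summand in the total cofibre is precisely the alternating sum of the three face maps that is the Arnold cocycle. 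Quasi-isomorphy of $\Psi$ would then follow by filtering both sides by $|\gamma|$ (equivalently by the exterior length in the $g_{ij}$-variables): on the associated graded, non-forest contributions of the cubical diagram become acyclic and $\Psi$ reduces, on each forest-indexed summand, to the Poincar\'e duality isomorphism. The most delicate bookkeeping will be signs, threading through the suspension conventions, the orientation-twisted action by $\rho=\sgn^m$, and the signs of \refD{totalcofibre}.
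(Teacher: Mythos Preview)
Your overall strategy matches the paper's: apply \refT{EquivLefschSystem} to the diagonals in $M^k$, replace the cubical diagram $\Apl(X_\bullet)$ by the algebraic cube $\barA(\gamma)=A^{\otimes\|\gamma\|}$ via K\"unneth and the zigzag through $R$, and then compare $s^{-mk}\TotCof(\#\barA)$ with $F(A,k)$.

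The genuine gap is the direction of your comparison map. Your $\Psi\colon F(A,k)\to s^{-mk}\TotCof(\#\barA)$, sending $a\cdot g_{i_1j_1}\cdots g_{i_lj_l}$ to $y_\gamma\otimes\bar a$, is \emph{not well-defined}. You yourself observe that for the triangle $\gamma=\{(i,j),(j,l),(i,l)\}$ the alternating sum of the three face maps is $D(y_\gamma\cdot\epsilon)$ in the total cofibre. That element is a \emph{boundary}, but it is not zero: the three summands live in three distinct direct summands $y_{\gamma'}\cdot\#\barA(\gamma')$ and no cancellation occurs. Hence the Arnold relation, which is zero in $F(A,k)$, would be sent by $\Psi$ to a non-zero element. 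No adjustment of signs can repair this, because the target has strictly more generators (one for every graph, redundant or not) than the source.

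The paper resolves this by building the map in the opposite direction,
\[
\Phi\colon s^{-mk}\TotCof(\#\barA)\longrightarrow F(A,k),\qquad s^{-mk}y_\gamma\epsilon_{\|\gamma\|}\longmapsto \lambda(\gamma)\,g_\gamma,
\]
with carefully chosen signs $\lambda(\gamma)$. This is automatically well-defined since the source is free on the $y_\gamma$'s; redundant graphs go to zero because $g_\gamma=0$ for them. The burden then shifts to proving $\Phi D=d\Phi$, and here the Arnold relation is exactly what makes $\Phi(D(y_{\text{triangle}}\epsilon))=0=d\Phi(y_{\text{triangle}}\epsilon)$. The paper handles general redundant graphs by an induction on cycle length (\refL{PhiD}), and proves $\Phi$ is a quasi-isomorphism by exhibiting a subcomplex indexed by a set $\Gamma_0$ of particular forests on which $\Phi$ restricts to an isomorphism (\refL{Phiqi}). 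Your filtration idea is in the right spirit, but the clean statement is that the inclusion of the $\Gamma_0$-indexed subcomplex into the full total cofibre is already a quasi-isomorphism.
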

The rest of the section is devoted to the proof of that theorem.

The triangulation of $M$ induces a triangulation on the $k$-fold product $W:=M^k$ compatible with the action of
 $\Sigma_k$ (Hint: Take the triangulation induced by the prismatic decomposition of $\Delta^{p_1}\times\cdots\times\Delta^{p_k}$
 after fixing a linear order on the vertices of $M$.)
  
Switching two factors of $M^k$ induces a self-map of degree $(-1)^m$.
Therefore the orientation representation associated to the action of $\Sigma_k$ on $M^k$
is given by, for $\sigma\in\Sigma_k$,
$$\rho(\sigma)=(\sgn(\sigma))^m.$$

Let $E$ be the set
$$E:=\{(i,j):1\leq i<j\leq k\}$$
linearly ordered by the left lexicographic order. This can be considered as the set of (non-oriented) edges
on the set of vertices $\underline{k}=\{1,\cdots,k\}$. Then the
 objects of $\Gamma=(2^E)^{\operatorname{op}}$  can be interpreted as simple graphs (no
loops, no double edges, no orientations on the edges) with vertices in $\underline{k}$.

Suppose given a graph $\gamma\in\Gamma$ and a permutation $\sigma\in\Sigma_k$ . The lexicographic order on  $E$ induces a linear order on $\gamma\subset E$
and $\sigma$ induces a bijection $\sigma\colon \gamma\iso\sigma{\cdot}\gamma$. We can consider its 
signature $\sgn(\sigma:\gamma)$ as in \refD{sgn}, not to be confused with $\sgn(\sigma)$.
We denote by $\pi_0(\gamma)$ the set of connected components of the graph
$\gamma$. In other words $\pi_0(\gamma)=\{\gamma(1),\cdots,\gamma(r)\}$
is a partition of $\underline k$ such that two vertices $i$ and $j$ belongs to the same set
$\gamma(u)$ if and only if they are connected by a path in $\gamma$. We define a linear order on  $\pi_0(\gamma)$ by
$$\gamma(u)\leq \gamma(v)\quad\Longleftrightarrow \quad
\min \gamma(u)\leq\min \gamma(v).$$
We denote by $\|\gamma\|:=|\pi_0(\gamma)|$ the number of path components of $\gamma$, not
to be confused with the  number $|\gamma|$ of edges in $\gamma$. A permutation $\sigma\in \Sigma_k$
also induces a bijection $\sigma\colon\pi_0(\gamma)\iso\pi_0(\sigma{\cdot}\gamma)$ of ordered sets and
we denote its signature by $\sgn(\sigma:\pi_0(\gamma))$.

For $(i,j)\in E$ set
$$X_{(i,j)}:=\{(x_1,\cdots,x_k)\in M^k|x_i=x_j\}\subset W=M^k$$
which defines a system of subpolyhedra $\{X_e\subset M^k\}_{e\in E}$. The associated cubical diagram as
in \refS{equivLefschetzSystPolyh}
is the functor
$$
\barDelta\colon\Gamma\to\Top,\gamma\to\barDelta(\gamma)=\{x_1,\cdots,x_k)\in
M^k|x_i=x_j\textrm{ if }(i,j)\in\gamma\}=\cap_{e\in\gamma}X_e.
$$
where the morphisms $\barDelta(\gamma\to\gamma')$ are the obvious
inclusions $\barDelta(\gamma)\hookrightarrow \barDelta(\gamma')$.

We define another functor, $\barM$, naturally homeomorphic to
$\barDelta$. For $\gamma\in\Gamma$, set  $\pi_0(\gamma)=\{\gamma(1),\cdots,\gamma(r)\}$
with $\gamma(u)<\gamma(v)$ if $1\leq u<v\leq r=\|\gamma\|$.  Set
$\barM(\gamma)=M^{\times\|\gamma\|}$, the $r$-fold product of $M$.
We have a homeomorphism
$$h(\gamma)\colon\barM(\gamma)\to\barDelta(\gamma),(y_1,\cdots,y_r)\to(x_1,\cdots,x_k)
$$
defined by $x_i=y_u$ if $i\in\gamma(u)$.
It is easy to make $\barM$ into a functor such that
the homeomorphism $h\colon\barM\cong\barDelta$ is natural.

By an  \emph{iterated diagonal} we mean a diagonal map
$ M\to M^p,x\mapsto(x,\cdots,x)$, for $p\geq 0$. Each map
$$
\barM(\gamma\to\gamma')\colon M^{\times\|\gamma\|}\to
M^{\times\|\gamma'\|}
$$
is the composite of a product of $\|\gamma\|$ iterated diagonals
followed by a permutation of the $\|\gamma'\|$ factors. These are uniquely determined by $h(\gamma)$.

Recall the Kunneth quasi-isomorphism
$$
\Apl(M)^{\otimes r}\quism \Apl(M^{\times r})\,,\,
a_1\otimes\cdots\otimes a_r\mapsto
\pr_1^*(a_1){\cdot}\ldots{\cdot}\pr_r^*(a_r)
$$
Through this Kunneth quasi-isomorphism,
diagonal maps correspond exactly to the mutiplication, and
permutations of the factors of $M^{\times r}$ correspond to
permutations (with a Koszul sign) of $\Apl(M)^{\otimes r}$.
Therefore for each $\gamma\leq\gamma'\in\Gamma$ there exists a
morphism, obtained as a graded signed permutation followed by iterated
multiplications,
$$
\mu(\gamma\to\gamma')\colon \Apl(M)^{\otimes\|\gamma'\|}\to
\Apl(M)^{\otimes\|\gamma\|}
$$
making the following diagram commutes
$$
\xymatrix{
\Apl(M)^{\otimes\|\gamma\|}\ar[d]_-\simeq^-{\textrm{Kunneth}}\quad\quad&
\ar[l]_-{\mu(\gamma\to\gamma')}\quad\quad
\Apl(M)^{\otimes\|\gamma'\|}\ar[d]_-\simeq^-{\textrm{Kunneth}}
\\
\Apl(\barM(\gamma))\quad\quad&
\ar[l]^-{\Apl(\barM(\gamma\to\gamma'))}
\quad\quad\Apl(\barM(\gamma')). }
$$
In other words we have build a contravariant functor
$$
\Gamma\to\CDGA\,,\,\gamma\mapsto\Apl(M)^{\otimes\|\gamma\|}
$$
with the morphisms $\mu(\gamma\to\gamma')$ obtained as a
permutation followed by iterated multiplication and it is
 naturally quasi-isomorphic to the contravariant
functor $\Apl(\barM)\colon\Gamma\to\CDGA$.
Using the quasi-isomorphisms
$$
\xymatrix{\Apl(M)&R \ar[l]_-\simeq \ar[r]^-\simeq& A}
$$
and considering similar composite of permutations and iterated
multiplications on iterated tensor products of $A$ or $R$, we can
build a contravariant functor
$$
\barA\colon\Gamma\to\CDGA\,,\,\gamma\mapsto\barA(\gamma)=A^{\otimes\|\gamma\|}
$$
naturally weakly equivalent to $\Apl(\barM)$.

The duals of the above diagrams give $E$-cubical
diagrams of $R^{\otimes k}$-DGmodules. We equip them with 
the orientation-twisted dual of the action of 
$\Sigma_k$ on the $k$-fold tensor product by permutation of the factors with a Koszul sign. 
By \refT{EquivLefschSystem} the
$(km)$-th suspension of the total cofibre of the cubical diagram $\#\Apl(\barDelta)$
is equivariantly weakly equivalent to 
$$\Apl(M^k\smallsetminus\cup_{(i,j)\in E}X_{(i,j)})=\Apl(F(M,k)),$$
 as $\Sigma_k$-$R^{\otimes k}$-DGmodules. Therefore $\Apl(F(M,k))$ is quasi-isomorphic as
$\Sigma_k$-$R^{\otimes k}$-DGmodules to
\begin{equation}
\label{E:TotCofA}
s^{-mk}\TotCof(\#\barA)=
s^{-mk}\left(\oplus_{\gamma\in\Gamma}\,y_\gamma{\cdot}\#(A^{\otimes\|\gamma\|})\right).
\end{equation}

To finish the proof of  \refT{main} we will build a
quasi-isomorphism of $\Sigma_k$-$R^{\otimes k}$-DGmodules
$$
\Phi\colon s^{-mk}\TotCof(\#\barA)\to F(A,k).
$$
This is the content of the following series of lemma. 

First we introduce some notation and terminology.\\
We say that a graph $\gamma\in\Gamma$ is \emph{redundant} if there exists an edge $e\in\gamma$
such that $\|\gamma\smallsetminus e\|=\|\gamma\|$. \\
For an edge $e=(i,j)\in E$ with $1\leq i<j\leq k$ we set
$\Delta(e):=\pi^*_{ij}(\Delta)\in A^{\otimes k}$ where
$\Delta\in A\otimes A$ is the diagonal class of \refE{Delta}.\\
For a graph $\gamma\in\Gamma$, we set 
$$g_\gamma:=\prod_{e\in\gamma}g_e\in F(A,k)$$
where the product is taken in the lexicographic order of $\gamma\subset E$ and
$g_e=g_{ij}$ is the $m-1$-dimensional generator of $F(A,k)$. Notice that if $\gamma$ is a redundant graph
then $g_\gamma=0$ because of the Arnold relations and $(g_{ij})^2=0$.\\
For the Poincar\'e duality algebra $A$ with orientation form $\omega\in\#A^m$ we denote
by $[A]\in A^m$ its fundamental class characterized by $\omega([A])=1$. 
We have a unique degree $-\!m$ isomorphism of $A$-module $\theta\colon A\iso\#A$ characterized
by $\theta(1)=\omega$. For $r\geq 1$ we denote by $\epsilon_r\in \#(A^{\otimes r})^{rm}$ the
linear form characterized by 
$\epsilon_r([A]\otimes\cdots\otimes[A])=1$.\\
The multiplication of the algebra $A$ is denoted by $\operatorname{mult}\colon A\otimes A\to A$.

Our first three lemmas aim to give an explicit formula for the differential $D$ in the total cofibre
\refN{E:TotCofA}.
\begin{lemma}
\label{L:multth1}
$(\#\operatorname{mult})(\theta(1))=\pm(\theta\otimes\theta)(\Delta) \in \#(A\otimes A)$.
\end{lemma}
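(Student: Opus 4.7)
The plan is to verify the identity by evaluating both linear forms on an arbitrary pure tensor $a\otimes b\in A\otimes A$ and showing they agree up to a universal sign depending only on the degrees and on $m$.

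First I would compute the left-hand side directly. By definition of $\#\operatorname{mult}$,
\[
(\#\operatorname{mult})(\theta(1))(a\otimes b)\,=\,\theta(1)\bigl(\operatorname{mult}(a\otimes b)\bigr)\,=\,\omega(ab).
\]
So the left-hand side is the pairing $a\otimes b\mapsto\omega(ab)$ (up to the degree/sign subtleties between $\omega$ as an element of $\#A$ and as a linear form $A^m\to\Bk$).

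Next I would unpack the right-hand side. Expanding $\Delta=\sum_\lambda(-1)^{\deg a_\lambda}a_\lambda\otimes a_\lambda^*$ and applying $\theta\otimes\theta$ gives $\sum_\lambda(-1)^{\deg a_\lambda}\theta(a_\lambda)\otimes\theta(a_\lambda^*)$ in $\#A\otimes\#A$. Pushing this into $\#(A\otimes A)$ via the canonical isomorphism from Section~\ref{S:notation}, namely $(f\otimes g)(x\otimes y)=(-1)^{\deg g\cdot\deg x}f(x)g(y)$, and evaluating on $a\otimes b$ yields
\[
\sum_\lambda(-1)^{\deg a_\lambda}(-1)^{\deg\theta(a_\lambda^*)\cdot\deg a}\,\theta(a_\lambda)(a)\,\theta(a_\lambda^*)(b).
\]
Since $\theta$ is an $A$-linear map of degree $-m$ with $\theta(1)=\omega$, we have $\theta(x)(y)=\pm\omega(xy)$ with a sign depending only on degrees. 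Inserting this, the sum becomes $\pm\sum_\lambda(\text{sign}_\lambda)\,\omega(a_\lambda a)\,\omega(a_\lambda^* b)$, where only the terms with $\deg a_\lambda=\deg b$ contribute.

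The final step is to recognize the sum as $\omega(ab)$ up to sign. This uses the standard Poincar\'e duality expansion $b=\sum_\lambda\omega(a_\lambda b)\,a_\lambda^*$, which gives $\omega(ab)=\sum_\lambda\omega(a_\lambda b)\,\omega(a\,a_\lambda^*)$; graded commutativity then rearranges this into the sum above up to a global sign. The main (indeed only) obstacle is bookkeeping Koszul signs, arising from: the conventional sign in $\Delta$; the sign $(-1)^{\deg g\deg x}$ in the identification $\#A\otimes\#A\cong\#(A\otimes A)$; the degree $-m$ of $\theta$; and graded commutativity of $A$. Since the statement only claims equality up to an overall sign, no explicit tracking beyond verifying that the resulting sign is independent of $\lambda$ and of the choice of basis is required; the latter independence is automatic because $\Delta$ itself is basis-independent by \cite[Proposition 4.3]{LS:FM2Q}.
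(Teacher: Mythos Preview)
Your proposal is correct and follows essentially the same approach as the paper, whose entire proof is the one-line hint ``Evaluate both sides on the basis $\{a_\lambda\otimes a_\mu^*\}$.'' Your version carries out the same evaluation on a general pure tensor $a\otimes b$ and then reduces to the basis via the Poincar\'e duality expansion $b=\sum_\lambda\omega(a_\lambda b)\,a_\lambda^*$; evaluating directly on $a_\lambda\otimes a_\mu^*$ as the paper suggests is slightly cleaner since the relations $\omega(a_\lambda a_\mu^*)=\delta_{\lambda\mu}$ collapse both sides immediately.
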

\begin{proof}
Evaluate both sides on the basis $\{a_\lambda\otimes a_\mu^*\}$.
\end{proof}

\begin{lemma}
\label{L:Aeps}
Let $\gamma\in\Gamma$, $e\in\gamma$ and set $r=\|\gamma\|$. Then
there exist signs  $\nu(\gamma,e)\in\{-1,+1\}$ such that
\begin{equation*}
(\#\barA(\gamma\to\gamma\smallsetminus e))(\epsilon_r)=
\begin{cases} \nu(\gamma,e)\Delta(e){\cdot}\epsilon_{r+1} & \text{if $\|\gamma\smallsetminus e\|>\|\gamma\|$,}\\
\epsilon_{r}&\text{otherwise.}
\end{cases}
\end{equation*}
\end{lemma}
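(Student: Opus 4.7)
The plan is to split the analysis according to whether or not the edge $e$ is a bridge in the graph $\gamma$. In the non-bridge case, i.e., when $\|\gamma\smallsetminus e\|=\|\gamma\|$, the ordered partitions $\pi_0(\gamma)$ and $\pi_0(\gamma\smallsetminus e)$ of $\underline k$ coincide (the ordering by minimum vertex depends only on the underlying partition). Hence $\barM(\gamma)\to\barM(\gamma\smallsetminus e)$ is the identity map of $M^{\times r}$, the induced CDGA morphism $\barA(\gamma\smallsetminus e)\to\barA(\gamma)$ is the identity of $A^{\otimes r}$, and its dual fixes $\epsilon_r$, giving the second branch of the formula.

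In the bridge case, write $e=(i,j)$ and let $\gamma(u)$ be the unique component of $\gamma$ containing both $i$ and $j$. In $\gamma\smallsetminus e$ this component splits into two pieces $P_i\ni i$ and $P_j\ni j$. Since $i<j$ with $i\in P_i\subset\gamma(u)$, one has $\min P_i=\min\gamma(u)$, so $P_i$ occupies position $u'=u$ in $\pi_0(\gamma\smallsetminus e)$ while $P_j$ is inserted at some new position $v'>u$. Chasing through the natural homeomorphism $h\colon\barM\cong\barDelta$ and the description of $\barM$, the map $\barM(\gamma)\to\barM(\gamma\smallsetminus e)$ duplicates the $u$-th coordinate into positions $u'$ and $v'$ and distributes the remaining coordinates in the unique order-preserving way. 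By the Kunneth description of $\barA$ given just before the lemma, $\barA(\gamma\smallsetminus e)\to\barA(\gamma)$ is then, up to a signed block permutation of tensor factors, the tensor product of $\operatorname{mult}\colon A\otimes A\to A$ in positions $(u',v')\to u$ with the identity on the remaining factors.

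Dualizing and applying this to $\epsilon_r=\omega^{\otimes r}$ (up to a Koszul sign, where $\omega=\theta(1)$): the identity tensor factors send $\omega$ to $\omega$ in the corresponding positions of $\#(A^{\otimes(r+1)})$, while \refL{multth1} gives $\#\operatorname{mult}(\theta(1))=\pm(\theta\otimes\theta)(\Delta)$ on the pair of factors involved in the multiplication. Using that $\theta$ is $A$-linear with $\theta(a)=a\cdot\omega$, one rewrites $(\theta\otimes\theta)(\Delta)=\Delta\cdot(\omega\otimes\omega)$ in $\#(A\otimes A)$. Reassembling these contributions in $\#(A^{\otimes(r+1)})$, and identifying the class $\Delta$ sitting in positions $(u',v')$ with the image of $\Delta(e)=\pi^*_{ij}(\Delta)\in A^{\otimes k}$ under the canonical projection $A^{\otimes k}\to A^{\otimes\|\gamma\smallsetminus e\|}$ that multiplies together the factors inside each component of $\pi_0(\gamma\smallsetminus e)$, yields $\nu(\gamma,e)\,\Delta(e)\cdot\epsilon_{r+1}$ for an explicit sign $\nu(\gamma,e)\in\{\pm1\}$.

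The hard part is not the structural content, which is forced by the Kunneth isomorphism and \refL{multth1}, but the careful bookkeeping of signs. I would define $\nu(\gamma,e)$ as the product of three signs: the sign from \refL{multth1}, the Koszul sign from the block permutation bringing the $u'$-th and $v'$-th tensor factors into adjacent position, and the sign coming from the natural isomorphism $\#A\otimes\#A\cong\#(A\otimes A)$ recalled in \refS{notation} (together with the sign in $s^k\#M\cong\#s^kM$ if any suspensions enter through the module conventions). The remaining work is then a routine but delicate verification that this product is indeed the correct sign.
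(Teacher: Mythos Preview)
Your argument is essentially the paper's argument written out in more detail: in the non-bridge case the map is the identity, and in the bridge case it is a signed permutation followed by a multiplication of two tensor factors, so dualizing and using \refL{multth1} (equivalently, evaluating on a basis of $A^{\otimes(r+1)}$) gives $\pm\Delta(e)\cdot\epsilon_{r+1}$. Since the lemma only asserts the \emph{existence} of a sign $\nu(\gamma,e)$, your careful enumeration of sign sources is more than is needed, but it is in the right spirit.

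One small inaccuracy worth flagging: the claim ``since $i<j$ with $i\in P_i\subset\gamma(u)$, one has $\min P_i=\min\gamma(u)$'' is not true in general. For instance, if $\gamma(u)=\{1,5,7\}$ with edges $(1,7)$ and $(5,7)$ and you remove $e=(5,7)$, then $P_i=P_5=\{5\}$ while $\min\gamma(u)=1\in P_7=P_j$. Fortunately this does not damage the proof: the precise positions $u',v'$ of $P_i,P_j$ in $\pi_0(\gamma\smallsetminus e)$ only affect which permutation occurs, hence only the value of the sign $\nu(\gamma,e)$, not the structural form $\pm\Delta(e)\cdot\epsilon_{r+1}$. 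Since the lemma absorbs all such signs into $\nu(\gamma,e)$, the argument stands.
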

\begin{proof}
In the first case this map is the dual of a signed permutation of $A^{\otimes r+1}$
followed by a multiplication of two adjacent factors. An argument analogous to that of \refL{multth1}
by evaluation on a basis of $A^{\otimes r+1}$ implies the formula.\\
In the second case,  $\|\gamma\smallsetminus e\|=\|\gamma\|$ and
 $\barA(\gamma\to\gamma\smallsetminus e)$ is the identity map.
\end{proof}
\begin{lemma}
\label{L:Dnonred}
Let $\gamma\in\Gamma$ be a non redundant graph. Then the differential $D$ in the total cofibre
\refN{E:TotCofA} satisfies
\[
D\left(s^{-mk}y_\gamma\epsilon_{\|\gamma\|}\right)=
(-1)^{mk}\sum_{e\in\gamma}s^{-mk}
(-1)^{\pos(e:\gamma)}y_{\gamma\smallsetminus e}\Delta(e)\epsilon_{\|\gamma\|+1}.
\]
\end{lemma}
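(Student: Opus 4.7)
The plan is to apply the total cofibre differential from \refD{totalcofibre} directly, use that $d\epsilon_r=0$, and invoke \refL{Aeps} for each face map.

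By the defining formula of the differential on $\TotCof(\#\barA)$, we have
\[
D(y_\gamma\epsilon_{\|\gamma\|})=(-1)^{|\gamma|}y_\gamma\,d(\epsilon_{\|\gamma\|})+\sum_{e\in\gamma}(-1)^{\pos(e:\gamma)}y_{\gamma\smallsetminus e}\,(\#\barA(\gamma\to\gamma\smallsetminus e))(\epsilon_{\|\gamma\|}).
\]
Applying $s^{-mk}$ produces the overall sign $(-1)^{mk}$ coming from the suspension convention $d(s^{-mk}x)=(-1)^{mk}s^{-mk}(dx)$. Hence the result will follow once the first term vanishes and each face term is identified with $\Delta(e)\,\epsilon_{\|\gamma\|+1}$ (up to the correct sign).

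First I would show that $d(\epsilon_{\|\gamma\|})=0$. Since $\epsilon_r\in\#(A^{\otimes r})^{rm}$ is concentrated in top degree, we only need to see that $\epsilon_r$ kills $d((A^{\otimes r})^{rm-1})$. But on degree $rm$, $\epsilon_r$ agrees (up to a fixed scalar) with $\omega^{\otimes r}$, and axiom (iii) of \refD{PD-CDGA} together with the Leibniz rule applied factorwise gives $\omega^{\otimes r}(d(A^{\otimes r}))=0$. Hence the first term in $D$ vanishes.

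Next, because $\gamma$ is non-redundant, every $e\in\gamma$ satisfies $\|\gamma\smallsetminus e\|=\|\gamma\|+1>\|\gamma\|$, so \refL{Aeps} applies in its first case and gives
\[
(\#\barA(\gamma\to\gamma\smallsetminus e))(\epsilon_{\|\gamma\|})=\nu(\gamma,e)\,\Delta(e)\cdot\epsilon_{\|\gamma\|+1}
\]
for some sign $\nu(\gamma,e)\in\{\pm1\}$. Plugging this into the formula for $D$ and combining with the sign $(-1)^{mk}$ from the suspension yields precisely the claimed identity, provided $\nu(\gamma,e)=+1$ throughout.

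The main obstacle is thus the sign bookkeeping showing $\nu(\gamma,e)=+1$. The map $\#\barA(\gamma\to\gamma\smallsetminus e)$ is the dual of the composite of a signed permutation of the tensor factors of $A^{\otimes(\|\gamma\|+1)}$ (bringing the two components of $\pi_0(\gamma\smallsetminus e)$ that merge into one component of $\pi_0(\gamma)$ into adjacent positions according to our chosen linear order on $\pi_0(-)$) followed by multiplication of those two adjacent factors. Evaluating on the preferred basis element $[A]^{\otimes(\|\gamma\|+1)}$, \refL{multth1} together with the definitions of $\epsilon_r$ and $\Delta$ identifies the outcome with $\Delta(e)\cdot\epsilon_{\|\gamma\|+1}$, and the Koszul signs coming from the permutation cancel against those built into $\theta$ and $\epsilon_r$ because the linear order on $\pi_0(-)$ is derived consistently from the vertex order on $\underline{k}$. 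This sign chase, which I expect to be the only subtle point, is routine once one commits to a choice of ordered basis; the statement of \refL{Aeps} is phrased with an undetermined $\nu$ precisely so that this computation can be settled here in the non-redundant case.
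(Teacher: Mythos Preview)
Your first three paragraphs are exactly the paper's proof: apply the differential from \refD{totalcofibre}, note $d\epsilon_r=0$, and invoke \refL{Aeps} in its first case (valid since non-redundancy means $\|\gamma\smallsetminus e\|=\|\gamma\|+1$ for every $e\in\gamma$). That is all the paper does.

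The problem is your last paragraph. You go on to argue that $\nu(\gamma,e)=+1$ for every edge, claiming the Koszul signs ``cancel'' because the orderings on the $\pi_0$'s are consistently induced. This is false. The signs $\nu(\gamma,e)$ are genuinely nontrivial: look at \refL{tau}, equation \refN{E:taunu}, which gives
\[
\nu(\gamma,e)\,\nu(\tau{\cdot}\gamma,\tau{\cdot}e)=\eta^p_e\bigl(\sgn(\tau:\pi_0(\gamma))\,\sgn(\tau:\pi_0(\gamma\smallsetminus e))\bigr)^m,
\]
and the right-hand side is not identically $+1$. Likewise \refL{deflambda} would collapse to something much simpler if $\nu$ were trivial. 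Concretely, already for $k=3$ and $\gamma=\{(1,3)\}$ the map $\barA(\gamma\to\emptyset)$ requires a nontrivial transposition of tensor factors before multiplying, and the resulting Koszul sign does not evaporate.

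What has happened is that the displayed formula in the statement of \refL{Dnonred} is missing the factor $\nu(\gamma,e)$; the formula one actually obtains from \refD{totalcofibre} and \refL{Aeps} is
\[
D\bigl(s^{-mk}y_\gamma\epsilon_{\|\gamma\|}\bigr)=(-1)^{mk}\sum_{e\in\gamma}(-1)^{\pos(e:\gamma)}\,\nu(\gamma,e)\,s^{-mk}y_{\gamma\smallsetminus e}\,\Delta(e)\,\epsilon_{\|\gamma\|+1},
\]
and this is the version used downstream (it is precisely what makes the recursion in \refL{deflambda} and the check in \refL{PhiD} work). So rather than trying to kill $\nu$, you should keep it and recognise the omission in the stated formula as a typo.
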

\begin{proof}
Use the formula of $D$ in \refD{totalcofibre} and \refL{Aeps}.
\end{proof}

The following lemma serves to define signs $\lambda(\gamma)=\pm1$
that appears in the definition of $\Phi$ in \refL{defPhi}. The formula below is exactly
the one needed to make $\Phi$ commute with the differential (see \refL{PhiD}.)
\begin{lemma}
\label{L:deflambda}
There exists a  map
$\lambda\colon\Gamma\to\{-1,+1\}$ such that $\lambda(\emptyset)=1$ and
for each non redundant graph $\gamma\in\Gamma$ and $e\in\gamma$
$$\lambda(\gamma)=-(-1)^{m(\pos(e:\gamma)+\|\gamma\|)}\nu(\gamma,e)\lambda(\gamma\smallsetminus e).
$$
\end{lemma}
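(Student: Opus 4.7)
The plan is to construct $\lambda$ by induction on $|\gamma|$ and to reduce well-definedness to a two-edge ``square relation'' that follows from $D^2=0$ on the total cofibre. First I set $\lambda(\emptyset):=1$, and for each redundant $\gamma$ I set $\lambda(\gamma):=1$ arbitrarily (the formula is only required for non-redundant $\gamma$). For a non-redundant $\gamma$ with $|\gamma|\geq 1$, the graph $\gamma$ is a forest, so removing any edge keeps us non-redundant and the recursion is well-founded. I distinguish the lexicographically smallest edge $e_0\in\gamma$ and define $\lambda(\gamma)$ via the stated formula applied to $e_0$. By construction the formula holds for $e=e_0$; the remaining work is to show it then holds for every other edge $e\in\gamma$.

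By the strengthened inductive hypothesis, the formula already holds for every edge of $\gamma\smallsetminus e$ and of $\gamma\smallsetminus e_0$. Expanding $\lambda(\gamma\smallsetminus e_0)$ along $e$ and $\lambda(\gamma\smallsetminus e)$ along $e_0$, substituting into the two candidate expressions for $\lambda(\gamma)$, using the forest identity $\|\gamma\smallsetminus e\|=\|\gamma\smallsetminus e_0\|=\|\gamma\|+1$, and cancelling the common factor $\lambda(\gamma\smallsetminus\{e,e_0\})$, the desired equality reduces to the purely combinatorial identity
$$(-1)^{m(\pos(e_0:\gamma)+\pos(e:\gamma\smallsetminus e_0))}\nu(\gamma,e_0)\nu(\gamma\smallsetminus e_0,e)=(-1)^{m(\pos(e:\gamma)+\pos(e_0:\gamma\smallsetminus e))}\nu(\gamma,e)\nu(\gamma\smallsetminus e,e_0).$$

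To establish this identity I would apply $D^2=0$ to the element $y_\gamma\epsilon_{\|\gamma\|}$ in $\TotCof(\#\barA)$, using the explicit formula of \refL{Dnonred} together with the $\nu$-signs of \refL{Aeps}. Since every one-edge subgraph of the forest $\gamma$ is itself a forest, each contribution picks up a factor $\nu(\cdot,\cdot)\Delta(\cdot)$, so the iterated differential expands as a sum of linearly independent terms $y_{\gamma\smallsetminus\{e,e_0\}}\Delta(e)\Delta(e_0)\epsilon_{\|\gamma\|+2}$ indexed by unordered pairs $\{e,e_0\}\subset\gamma$. The coefficient of each such term is precisely the difference of the two sides of the displayed identity, and $D^2=0$ forces it to vanish. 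The main obstacle is sign bookkeeping, concentrated in the parity identity relating $\pos(e:\gamma)$ to $\pos(e:\gamma\smallsetminus e_0)$ (and the analogue with $e$ and $e_0$ swapped), which is an exact analogue of \refL{sgngpos} applied to the transposition that exchanges the roles of the two removed edges. Once the square identity is in hand, the induction on $|\gamma|$ closes and produces the desired map $\lambda\colon\Gamma\to\{-1,+1\}$.
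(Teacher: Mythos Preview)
Your approach is correct and essentially the same as the paper's: both define $\lambda$ inductively on $|\gamma|$ and reduce well-definedness to a two-edge ``square identity'', which unwinds to the relation $\nu(\gamma,e_1)\nu(\gamma\smallsetminus e_1,e_2)=(-1)^m\nu(\gamma,e_2)\nu(\gamma\smallsetminus e_2,e_1)$ once the elementary parity fact $\pos(e_1:\gamma)+\pos(e_2:\gamma\smallsetminus e_1)\not\equiv\pos(e_2:\gamma)+\pos(e_1:\gamma\smallsetminus e_2)\pmod 2$ is used.

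The only difference is in how that $\nu$-identity is established. The paper proves it directly from the \emph{functoriality} of $\#\barA$: applying the two composites $\#\barA(\gamma\to\gamma\smallsetminus e_i)$ then $\#\barA(\gamma\smallsetminus e_i\to\gamma\smallsetminus\{e_1,e_2\})$ to $\epsilon_r$ via \refL{Aeps} gives the same element, and comparing yields the identity after using $\Delta(e_1)\Delta(e_2)=(-1)^m\Delta(e_2)\Delta(e_1)$. Your route via $D^2=0$ is the same computation wrapped in the total-cofibre formalism; it is correct, but note that to extract the identity for a \emph{fixed} pair $\{e,e_0\}$ from the vanishing of the full sum $D^2(y_\gamma\epsilon_{\|\gamma\|})$ you are implicitly using that the summands indexed by different pairs live in distinct direct summands $y_{\gamma\smallsetminus\{e,e_0\}}\cdot\#\barA(\gamma\smallsetminus\{e,e_0\})$ and that $\Delta(e)\Delta(e_0)\cdot\epsilon_{\|\gamma\|+2}\neq 0$. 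The paper's direct functoriality argument sidesteps this linear-independence check.
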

\begin{proof}
Set $R(\gamma,e):=-(-1)^{m(\pos(e:\gamma)+\|\gamma\|)}\nu(\gamma,e)$
so that  the equation of the statement is 
$\lambda(\gamma)=R(\gamma,e){\cdot}\lambda(\gamma\smallsetminus e)$.
For a non redundant graph $\gamma$ we define $\lambda(\gamma)$ by induction on $|\gamma|$ using this equation
but we need to prove that it is independent of the choice of the edge $e\in\gamma$.
For this it is enough to show that if $e_1$ and $e_2$ are two distinct edges in
 $\gamma$ then
$$
R(\gamma,e_1){\cdot}R(\gamma\smallsetminus e_1,e_2)=R(\gamma,e_2){\cdot}R(\gamma\smallsetminus e_2,e_1),
$$
which is equivalent to 
\begin{equation}
\label{E:nudeflambda}
\nu(\gamma,e_1)\nu(\gamma\smallsetminus e_1,e_2)=(-1)^m
\nu(\gamma,e_2)\nu(\gamma\smallsetminus e_2,e_1).
\end{equation}

Set $r=\|\gamma\|$. Using \refL{Aeps} we compute
\begin{eqnarray*}
(\#\barA(\gamma\smallsetminus e_1\to\gamma\smallsetminus\{e_1,e_2\}))
\left(
\left(\#\barA(\gamma\to\gamma\smallsetminus e_1)\right)(\epsilon_r)\right)=\\
\quad =
\nu(\gamma,e_1)\Delta(e_1)\left(\left( \#A(\gamma\smallsetminus e_1\to\gamma \smallsetminus\{e_1,e_2\} )\right)
(\epsilon_{r+1})\right)&=\\
\quad =
\nu(\gamma,e_1) \Delta(e_1) \nu(\gamma \smallsetminus e_1,e_2) \Delta(e_2) \epsilon_{r+2}.
\end{eqnarray*}
A similar computation gives 
$$
\#\barA(\gamma\smallsetminus e_2\to\gamma\smallsetminus\{e_1,e_2\})
\left(
\#\barA(\gamma\to\gamma\smallsetminus e_2)(\epsilon_r)\right)=
\nu(\gamma,e_2) \Delta(e_2) \nu(\gamma \smallsetminus e_2,e_1) \Delta(e_1) \epsilon_{r+2}.$$
Since $\#\barA$ is a functor, the last two expressions are equal and this implies \refE{nudeflambda}
because $\Delta(e_1)\Delta(e_2)=(-1)^m\Delta(e_2)\Delta(e_1)$.
\end{proof}

\begin{lemma}
\label{L:defPhi}
There exists a unique $A^{\otimes k}$-module map
$$\Phi\colon s^{-mk}\TotCof(\#\barA)\to F(A,k)$$
such that for $\gamma\in\Gamma$
$$\Phi\left(s^{-mk}y_\gamma\epsilon_{\|\gamma\|}\right)=\lambda(\gamma)g_\gamma.
$$
\end{lemma}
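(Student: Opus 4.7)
The plan is to establish uniqueness and then well-definedness for existence, both resting on the observation that each summand of the total cofibre \refequ{E:TotCofA} is cyclic as an $A^{\otimes k}$-module. For each $\gamma\in\Gamma$ the $A^{\otimes k}$-action on $\#(A^{\otimes\|\gamma\|})$ factors through the structure map $\mu_\gamma:=\barA(\gamma\to\emptyset)\colon A^{\otimes k}\to A^{\otimes\|\gamma\|}$, which is (up to a signed permutation of the factors) the partial multiplication collapsing each connected component of $\gamma$ into a single factor. Since $A$ is Poincar\'e duality, so is $A^{\otimes\|\gamma\|}$ with fundamental class $[A]^{\otimes\|\gamma\|}$; hence $\epsilon_{\|\gamma\|}$ generates $\#(A^{\otimes\|\gamma\|})$ as a free rank-one $A^{\otimes\|\gamma\|}$-module via the isomorphism $\theta^{\otimes\|\gamma\|}$. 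In particular $s^{-mk}y_\gamma\cdot\#(A^{\otimes\|\gamma\|})$ is a cyclic $A^{\otimes k}$-module generated by $s^{-mk}y_\gamma\epsilon_{\|\gamma\|}$, and since the total cofibre is the direct sum of these summands, the prescribed values determine $\Phi$ uniquely.

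For existence I would define $\Phi$ on each summand by $\Phi(s^{-mk}y_\gamma(\epsilon_{\|\gamma\|}\cdot b)):=\lambda(\gamma)\,g_\gamma\cdot b$ for $b\in A^{\otimes k}$. Well-definedness amounts to the implication $b\in\ker(\mu_\gamma)\Rightarrow g_\gamma\cdot b\in I$. The kernel $\ker(\mu_\gamma)$ is the ideal of $A^{\otimes k}$ generated by the elements $\pi^*_i(a)-\pi^*_j(a)$ for $a\in A$ and for pairs $(i,j)$ with $i,j$ in the same connected component of $\gamma$, so it suffices to prove $(\pi^*_i(a)-\pi^*_j(a))\,g_\gamma\in I$ for such pairs. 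When $(i,j)$ is itself an edge of $\gamma$ this follows from the symmetry relation (ii) of \refD{FAk} multiplied by the remaining generators of $g_\gamma$. For a general path $i=i_0,i_1,\ldots,i_r=j$ of $\gamma$-edges, one telescopes
\[\pi^*_i(a)-\pi^*_j(a)=\sum_{s=0}^{r-1}\bigl(\pi^*_{i_s}(a)-\pi^*_{i_{s+1}}(a)\bigr)\]
to reduce to the edge case. Finally, when $\gamma$ is redundant one has $g_\gamma=0$ in $F(A,k)$ by the Arnold relations and $g_{ij}^2=0$, so the entire summand maps to zero regardless of the (otherwise irrelevant) value of $\lambda(\gamma)$.

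The main technical hurdle is Koszul sign bookkeeping. Identifying $\#(A^{\otimes\|\gamma\|})$ with a free rank-one right $A^{\otimes\|\gamma\|}$-module via $\theta^{\otimes\|\gamma\|}$ requires the convention of \refS{notation} that converts the natural left action of the commutative algebra $A^{\otimes\|\gamma\|}$ on its dual into a right action, and $\mu_\gamma$ itself carries a permutation sign. None of these signs obstructs well-definedness since they merely rescale the generators $\epsilon_{\|\gamma\|}$ and $g_\gamma$ by invertible scalars; they are precisely what the signs $\nu$ and $\lambda$ introduced in \refL{Aeps} and \refL{deflambda} were engineered to absorb, and they will play the decisive role in the subsequent verification that $\Phi$ commutes with the differential.
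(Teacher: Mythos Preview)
Your proof is correct and follows essentially the same approach as the paper's: each summand $s^{-mk}y_\gamma\,\#(A^{\otimes\|\gamma\|})$ is a free rank-one $A^{\otimes\|\gamma\|}$-module on $s^{-mk}y_\gamma\epsilon_{\|\gamma\|}$, its $A^{\otimes k}$-structure factors through the algebra map $\mu_\gamma$, and the extension to an $A^{\otimes k}$-map is well-defined thanks to the symmetry relations in $F(A,k)$. The paper's proof is terser, simply asserting that well-definedness ``is a consequence of the symmetry relations $\pi_i^*(a)g_{ij}=\pi_j^*(a)g_{ij}$''; you have supplied the missing details, namely the identification of $\ker(\mu_\gamma)$ as the ideal generated by $\pi^*_i(a)-\pi^*_j(a)$ for $i,j$ in the same component of $\gamma$, and the telescoping reduction from an arbitrary path to the edge case.
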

\begin{proof}
The factor $s^{-mk}y_\gamma \#A^{\otimes \|\gamma\|}$ is a free 
$A^{\otimes\|\gamma\|}$-module generated by  $s^{-mk}y_\gamma\epsilon_{\|\gamma\|}$.
Its $A^{\otimes k}$-module structure is induced by an algebra map
$A^{\otimes k}\to A^{\otimes \|\gamma\|}$ obtained as a permutation followed by iterated multiplications.
The fact that $\Phi(s^{mk}y_\gamma\epsilon_{\|\gamma\|})=\lambda(\gamma)g_\gamma$ can be extended
to a $A^{\otimes k}$-module map is a consequence of the symmetry relations
$\pi_i^*(a)g_{ij}=\pi_j^*(a)g_{ij}$ in $F(A,k)$.
\end{proof}
Notice that if $\gamma$ is a redundant graph then $\Phi\left(s^{-mk}y_\gamma\epsilon_{\|\gamma\|}\right)=0$.

The three next lemmas establish the equivariance of $\Phi$. 

\begin{lemma}
\label{L:sigmayeps}
Let $\gamma\in\Gamma$ and $\sigma\in\Sigma_k$. We have the following equation in the total cofibre \refN{E:TotCofA}
\[
\sigma{\cdot}\left(s^{-mk}y_\gamma{\cdot}\epsilon_{\|\gamma\|} \right)=
\sgn(\sigma:\gamma)\left(\sgn(\sigma)\sgn(\sigma:\pi_0(\gamma))\right)^m\,
s^{-mk}y_{\sigma{\cdot}\gamma}{\cdot}\epsilon_{\|\sigma{\cdot}\gamma\|}
\]
\end{lemma}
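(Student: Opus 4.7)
The plan is to decompose the action on $s^{-mk}\TotCof(\#\barA)$ into three independent contributions and collect the signs:
(a) from \refN{E:gy}, acting on $y_\gamma$ produces $\sgn(\sigma:\gamma)\,y_{\sigma\cdot\gamma}$;
(b) from \refD{orientation-twisted}, the orientation twist contributes $\rho(\sigma)=\sgn(\sigma)^m$, since switching two factors of $M^k$ is a self-map of degree $(-1)^m$, as recorded at the start of Section \refS{modelconf};
(c) the underlying map $\barA(\sigma,\gamma)$ is a signed permutation of the tensor factors of $A^{\otimes\|\gamma\|}$, whose dual applied to $\epsilon_{\|\gamma\|}$ introduces a Koszul sign.
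The suspension $s^{-mk}$ passes through the action without any further sign, since on a suspension one has $\sigma\cdot s^{-mk}z=s^{-mk}(\sigma\cdot z)$.

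First I would pin down the permutation in (c) explicitly. The geometric map $\sigma\colon\barM(\gamma)=M^{\|\gamma\|}\to M^{\|\sigma\cdot\gamma\|}=\barM(\sigma\cdot\gamma)$ realizes the bijection $\sigma\colon\pi_0(\gamma)\iso\pi_0(\sigma\cdot\gamma)$ of linearly ordered sets, so by \refD{sgn} it corresponds to a permutation $\tau\in\Sigma_{\|\gamma\|}$ with $\sgn(\tau)=\sgn(\sigma:\pi_0(\gamma))$. Transporting across the Kunneth quasi-isomorphism, the induced map on $\barA(\gamma)=A^{\otimes\|\gamma\|}$ is the signed permutation of factors determined by $\tau$ with Koszul signs.

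Next I would compute $\#\barA(\sigma,\gamma)(\epsilon_{\|\gamma\|})$ by evaluating on $[A]^{\otimes\|\gamma\|}$. The signed permutation takes $[A]^{\otimes r}$ to $\sgn(\tau)^m\,[A]^{\otimes r}$, because permuting $r$ classes of degree $m$ via a reduced decomposition of $\tau$ of length $\ell(\tau)$ incurs Koszul sign $((-1)^{m^2})^{\ell(\tau)}=\sgn(\tau)^m$, using the parity identity $m^2\equiv m\pmod 2$. Combining with the orientation twist $\rho(\sigma)=\sgn(\sigma)^m$ and invoking the characterization of $\epsilon_r$ by its value on the top class, I obtain
\[
\#\barA(\sigma,\gamma)(\epsilon_{\|\gamma\|})\;=\;\bigl(\sgn(\sigma)\sgn(\sigma:\pi_0(\gamma))\bigr)^m\,\epsilon_{\|\sigma\cdot\gamma\|}.
\]

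Finally I would apply the $G$-$R$-module rule $\sigma\cdot(y_\gamma\cdot x)=(\sigma\cdot y_\gamma)\cdot(\sigma\cdot x)$ from Section \refS{Gcubical}, substitute (a), (b), and (c), and pull $s^{-mk}$ outside to arrive at the claimed equality. The only delicate step is the Koszul-sign bookkeeping: one must confirm that the sign from permuting $r$ copies of a degree-$m$ class by $\tau$ collapses cleanly to $\sgn(\tau)^m$, and that this sign survives the dualization with no additional twist beyond the orientation factor $\rho(\sigma)$.
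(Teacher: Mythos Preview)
Your proposal is correct and follows exactly the paper's approach: the paper's proof simply names the same three sign contributions---$\sgn(\sigma:\gamma)$ from the action \refN{E:gy} on $y_\gamma$, $\sgn(\sigma)^m$ from the orientation twist, and $\sgn(\sigma:\pi_0(\gamma))^m$ as the Koszul sign of the permutation $A^{\otimes\|\gamma\|}\iso A^{\otimes\|\sigma\cdot\gamma\|}$ on an element of top degree---without further elaboration. Your additional step of evaluating on $[A]^{\otimes\|\gamma\|}$ to extract the Koszul sign is a clean way to make the third contribution explicit.
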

\begin{proof}
The factor $\sgn(\sigma:\gamma)$ is the sign coming from the action on $y_\gamma$
in the cubical diagram as in \refE{gy},
$\sgn(\sigma)^m$ is the orientation-twisting, and
\mbox{$\sgn(\sigma:\pi_0(\gamma))^m$} is the Koszul sign of the permutation
$A^{\otimes\|\gamma\|}\iso A^{\otimes\|\sigma{\cdot}\gamma\|}$
on an element of top degree.
\end{proof}

For $1\leq p\leq k-1$ and for an edge $e\in E$ or a graph $\gamma\in\Gamma$ we set
\begin{equation*}
\eta_e^p:=
\begin{cases} (-1)^m & \text{if $e=(p,p+1)$,}\\
+1 &\text{otherwise.}
\end{cases}\quad
\eta_\gamma^p:=
\begin{cases} (-1)^m & \text{if $(p,p+1)\in\gamma$,}\\
+1 &\text{otherwise.}
\end{cases}
\end{equation*}

\begin{lemma}
\label{L:tau}
Let $1\leq p\leq k-1$, consider the transposition $\tau=(p,p+1)\in\Sigma_k$, let $\gamma\in\Gamma$ be a non redundant graph
and let $e\in\gamma$. Then
\begin{align}
\label{E:taunu}
\nu(\gamma,e)\nu(\tau{\cdot}\gamma,\tau{\cdot}e)&=
\eta^p_e\left(\sgn(\tau:\pi_0(\gamma))\,\sgn(\tau:\pi_0(\gamma\smallsetminus e))\right)^m;
\\
\label{E:taulambda}
\lambda(\gamma)\lambda(\tau{\cdot}\gamma)&=
\eta^p_\gamma\left(-\sgn(\tau:\gamma)\,\sgn(\tau:\pi_0(\gamma))\right)^m;
\\
\label{E:taug}
\tau{\cdot}g_\gamma &=
\eta^p_\gamma\,\sgn(\tau:\gamma)^{m-1}\,g_{\tau{\cdot}\gamma}.
\end{align}
\end{lemma}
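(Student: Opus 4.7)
The plan is to establish the three equations in turn: \refN{E:taug} by direct computation from \refD{FAk}(ii); \refN{E:taunu}, the technical heart, from an explicit description of $\nu(\gamma,e)$; and \refN{E:taulambda} by induction from \refN{E:taunu}.

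For \refN{E:taug}, I apply the relation $\tau{\cdot}g_{ij}=g_{\tau(i)\tau(j)}$ factor-by-factor to $g_\gamma=\prod_{e\in\gamma}g_e$, with the product taken in the lexicographic order of $\gamma$. Since $\tau=(p,p+1)$ is an adjacent transposition, the only edge $e\in\gamma$ for which $\tau{\cdot}e$ is not already in increasing order is $e=(p,p+1)$, and the convention $g_{ji}=(-1)^mg_{ij}$ produces precisely the factor $\eta^p_\gamma$. Reordering the resulting product from the order of $\gamma$ into the lexicographic order of $\tau{\cdot}\gamma$ then introduces the Koszul sign $\sgn(\tau:\gamma)^{m-1}$, because each $g_e$ has degree $m-1$.

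For \refN{E:taunu}, the main obstacle, I first need to make $\nu(\gamma,e)$ explicit. Since $\gamma$ is non-redundant, $e$ is a bridge, so $\|\gamma\smallsetminus e\|=\|\gamma\|+1$; setting $r=\|\gamma\|$, the map $\barA(\gamma\to\gamma\smallsetminus e)\colon A^{\otimes r+1}\to A^{\otimes r}$ can be described, under the identification of each tensor with factors indexed in the canonical order by connected components, as the composition of a Koszul reordering bringing into adjacent positions the two tensor factors whose indexing components of $\pi_0(\gamma\smallsetminus e)$ merge in $\pi_0(\gamma)$, followed by multiplication of those two factors. Dualizing, evaluating on $\epsilon_r$, and applying \refL{multth1} will yield a formula $\nu(\gamma,e)=\epsilon_M\cdot(-1)^{m(v-u-1)}$, where $\epsilon_M\in\{\pm1\}$ is the universal sign of \refL{multth1} and $u<v$ are the positions in $\pi_0(\gamma\smallsetminus e)$ of the two merging components. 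The product $\nu(\gamma,e)\nu(\tau{\cdot}\gamma,\tau{\cdot}e)$ then reduces to a sign depending only on how these positions shift under $\tau$, controlled by the signatures of the bijections $\tau\colon\pi_0(\gamma)\iso\pi_0(\tau{\cdot}\gamma)$ and $\tau\colon\pi_0(\gamma\smallsetminus e)\iso\pi_0(\tau{\cdot}\gamma\smallsetminus\tau{\cdot}e)$. A case analysis---distinguishing the generic situation from the special case $e=(p,p+1)$, which is the unique configuration in which $\tau$ can interchange the two merging components---will then produce \refN{E:taunu}, with the factor $\eta^p_e$ arising from this special case. This position-tracking is the most delicate step of the proof.

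For \refN{E:taulambda}, I argue by induction on $|\gamma|$ using the recursion $\lambda(\gamma)=R(\gamma,e)\lambda(\gamma\smallsetminus e)$ of \refL{deflambda}. The base case $\gamma=\emptyset$ follows directly from $\lambda(\emptyset)=1$. For the inductive step, pick any $e\in\gamma$ and expand
\[\lambda(\gamma)\lambda(\tau{\cdot}\gamma)=R(\gamma,e)R(\tau{\cdot}\gamma,\tau{\cdot}e)\,\lambda(\gamma\smallsetminus e)\lambda(\tau{\cdot}(\gamma\smallsetminus e)).\]
The induction hypothesis handles the last two factors, and the product $R(\gamma,e)R(\tau{\cdot}\gamma,\tau{\cdot}e)$ is computed using \refN{E:taunu} together with \refL{sgngpos}, which converts $(-1)^{m(\pos(e:\gamma)+\pos(\tau{\cdot}e:\tau{\cdot}\gamma))}$ into $(\sgn(\tau:\gamma)\sgn(\tau:\gamma\smallsetminus e))^m$. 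Squared signatures cancel to $1$, and the elementary identity $\eta^p_e\eta^p_{\gamma\smallsetminus e}=\eta^p_\gamma$---verified by a trivial case analysis on whether $(p,p+1)\in\gamma$ and whether $e=(p,p+1)$---completes the induction.
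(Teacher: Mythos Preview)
Your treatments of \refN{E:taug} and \refN{E:taulambda} coincide with the paper's: the first is the direct Koszul-sign computation you describe, and the second is the same induction on $|\gamma|$ via \refL{deflambda}, \refL{sgngpos}, and \refN{E:taunu} (the paper even suggests the shortcut of choosing $e=(p,p+1)$ when that edge lies in $\gamma$, but as your computation shows, any edge works).

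For \refN{E:taunu} the paper takes a genuinely different and shorter route. Rather than making $\nu(\gamma,e)$ explicit, it exploits the fact---already established in \refP{actionTotCof}---that the differential $D$ on $s^{-mk}\TotCof(\#\barA)$ is $\Sigma_k$-equivariant. One expands both sides of
\[
\tau\cdot D\bigl(s^{-mk}y_\gamma\epsilon_{\|\gamma\|}\bigr)=D\bigl(\tau\cdot s^{-mk}y_\gamma\epsilon_{\|\gamma\|}\bigr)
\]
using \refL{Dnonred} and \refL{sigmayeps}, and then compares the coefficients of $s^{-mk}y_{\tau\cdot\gamma\smallsetminus\tau\cdot e}\,\Delta(\tau\cdot e)\,\epsilon_{\|\gamma\|+1}$ on each side (invoking \refL{sgngpos} to match the $\pos$-signs). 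The factor $\eta^p_e$ drops out of the single relation $\tau\cdot\Delta((p,p+1))=(-1)^m\Delta((p,p+1))$. This structural argument never needs a closed formula for $\nu(\gamma,e)$.

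Your direct-computation approach is sound in principle, but the sketch understates the bookkeeping. The formula $\nu(\gamma,e)=\epsilon_M\cdot(-1)^{m(v-u-1)}$ must also account for which endpoint of $e$ lies in $C_u$ versus $C_v$ (this need not be the smaller endpoint in the smaller component), and your case analysis has to track how $\tau$ acts on $\pi_0(\gamma\smallsetminus e)$ in general---not only when $e=(p,p+1)$, since $p$ and $p+1$ can lie in distinct components of $\gamma\smallsetminus e$ even when $e\neq(p,p+1)$ and $(p,p+1)\notin\gamma$. None of this is fatal, but the paper's equivariance argument bypasses all of it in one line.
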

\begin{proof}
\refN{E:taunu} Since the differential $D$ on $s^{-mk}\TotCof(\#\barA)$ is equivariant we have
\[\tau.D(s^{-mk}y_\gamma{\cdot}\epsilon_{\|\gamma\|})=
D(\tau.s^{-mk}y_\gamma{\cdot}\epsilon_{\|\gamma\|}).
\]
Develop both sides of this equation using \refL{Dnonred} and \refL{sgngpos}. The sign $\eta^p_e$
comes from the fact that $\tau{\cdot}\Delta((p,p+1))=(-1)^m\Delta((p,p+1))$.

\refN{E:taulambda} By induction on the number of edges $|\gamma|$ using Lemmas \ref{L:deflambda}
 and \ref{L:sgngpos} and the previous formula.
(Hint: in the induction choose the edge $e\in\gamma$ to be $(p,p+1)$ when it belongs to $\gamma$.)

\refN{E:taug} The sign $\eta_\gamma^p$ comes from $g_{p+1,p}=(-1)^m g_{p,p+1}$ and the other sign is the Koszul sign of the
rearrangment of the $g_e$ which are of degree $m-1$.
\end{proof}

\begin{lemma}
\label{L:Phiequiv}
$\Phi$ is $\Sigma_k$-equivariant.
\end{lemma}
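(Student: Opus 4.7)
The plan is to reduce the verification of $\Sigma_k$-equivariance to a calculation on the adjacent transpositions $\tau=(p,p+1)$, which generate $\Sigma_k$, applied to the distinguished generators $s^{-mk}y_\gamma\epsilon_{\|\gamma\|}$ of the total cofibre. Since $\Phi$ is $A^{\otimes k}$-linear, the action of $\Sigma_k$ on $A^{\otimes k}$ is by signed permutation of factors, and the $\Sigma_k$-actions on both source and target are compatible with their respective $A^{\otimes k}$-module structures (through the same signed permutation of $A^{\otimes k}$), it suffices to verify that $\Phi(\tau\cdot s^{-mk}y_\gamma\epsilon_{\|\gamma\|})=\tau\cdot\Phi(s^{-mk}y_\gamma\epsilon_{\|\gamma\|})$ for each non-redundant $\gamma\in\Gamma$ and each adjacent transposition $\tau$. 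The case of a redundant $\gamma$ is handled separately: $g_\gamma=0$ by the Arnold and $g_{ij}^2=0$ relations, and $\tau\cdot\gamma$ is also redundant (since redundancy is invariant under vertex relabelings), so both sides vanish.

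For the main case, compute the left-hand side by applying \refL{sigmayeps} to obtain
\[
\Phi(\tau\cdot s^{-mk}y_\gamma\epsilon_{\|\gamma\|})=\sgn(\tau:\gamma)\bigl(\sgn(\tau)\,\sgn(\tau:\pi_0(\gamma))\bigr)^m\,\lambda(\tau\cdot\gamma)\,g_{\tau\cdot\gamma}.
\]
For the right-hand side, the formula \refN{E:taug} of \refL{tau} yields
\[
\tau\cdot\Phi(s^{-mk}y_\gamma\epsilon_{\|\gamma\|})=\lambda(\gamma)\,\eta^p_\gamma\,\sgn(\tau:\gamma)^{m-1}\,g_{\tau\cdot\gamma}.
\]

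The key identity is \refN{E:taulambda}, which expresses $\lambda(\tau\cdot\gamma)$ in terms of $\lambda(\gamma)$: substituting it into the left-hand side and using $\lambda(\gamma)^2=1$ reduces the question to a matching of signs. Concretely, the left-hand side becomes
\[
\lambda(\gamma)\,\eta^p_\gamma\,\sgn(\tau:\gamma)^{m+1}\,\sgn(\tau)^m\,\sgn(\tau:\pi_0(\gamma))^{2m}\,(-1)^m\,g_{\tau\cdot\gamma}.
\]
Since $\sgn(\tau:\pi_0(\gamma))^{2m}=1$ and $\sgn(\tau)=-1$ implies $\sgn(\tau)^m(-1)^m=1$, this collapses to $\lambda(\gamma)\,\eta^p_\gamma\,\sgn(\tau:\gamma)^{m+1}\,g_{\tau\cdot\gamma}$, which agrees with the right-hand side because $\sgn(\tau:\gamma)^{m+1}=\sgn(\tau:\gamma)^{m-1}$.

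The main obstacle is purely organizational: keeping track of the four sources of signs (the cubical action sign $\sgn(\tau:\gamma)$, the orientation-twist $\sgn(\tau)^m$, the Koszul sign $\sgn(\tau:\pi_0(\gamma))^m$ from permuting tensor factors, and the rearrangement sign $\sgn(\tau:\gamma)^{m-1}$ from reordering the degree-$(m\!-\!1)$ generators $g_e$) and confirming they cancel. The combinatorial content has already been packaged in Lemmas \ref{L:sigmayeps} and \ref{L:tau}, so no new ideas are required beyond assembling these pieces and invoking $\lambda(\gamma)^2=\sgn(\tau:\gamma)^2=1$ together with $\sgn(\tau)=-1$.
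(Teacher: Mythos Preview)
Your proof is correct and follows essentially the same approach as the paper: reduce to adjacent transpositions applied to the generators $s^{-mk}y_\gamma\epsilon_{\|\gamma\|}$, dispose of the redundant case by noting both sides vanish, and in the non-redundant case carry out the sign comparison using Lemmas~\ref{L:sigmayeps} and~\ref{L:tau}. The paper simply says ``it is a computation using Lemmas~\ref{L:sigmayeps} and~\ref{L:tau}'', whereas you have actually written out that computation; the sign bookkeeping you give is accurate.
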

\begin{proof}
It is enough to check the equivariance for transpositions $\tau=(p,p+1)$ of adjacent vertices applied
to the generators $s^{-mk}y_\gamma{\cdot}\epsilon_{\|\gamma\|}$.
If $\gamma$ is non redundant it is a computation using Lemmas  \ref{L:sigmayeps} and \ref{L:tau}.
If $\gamma$ is redundant then
the same is true for $\tau{\cdot}\gamma$ and the images by $\Phi$ of the corresponding generators are $0$.
\end{proof}

\begin{lemma}
\label{L:PhiD}
$\Phi$ commutes with the differentials.
\end{lemma}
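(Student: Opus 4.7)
The plan is to reduce the verification of $d\circ\Phi = \Phi\circ D$ to the generators $x_\gamma := s^{-mk}y_\gamma\epsilon_{\|\gamma\|}$, $\gamma\in\Gamma$, and then case-split according to whether $\gamma$ is a forest or contains a cycle. The reduction is valid because $\Phi$ is $A^{\otimes k}$-linear by \refL{defPhi} and both the differential $d$ on $F(A,k)$ and the differential $D$ on $s^{-mk}\TotCof(\#\barA)$ act as graded derivations over $A^{\otimes k}$; checking the identity on $x_\gamma$ then automatically propagates to any $a\cdot x_\gamma$ via the Leibniz rule.

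For non-redundant $\gamma$ (a forest), I would expand $d\Phi(x_\gamma)=\lambda(\gamma)d(g_\gamma)$ by the Leibniz rule applied to the lex-ordered product $g_\gamma=g_{e_1}\cdots g_{e_{|\gamma|}}$ using $d(g_e)=\Delta(e)$. Since $\deg g_e=m-1$ and $\deg\Delta(e)=m$ give $(-1)^{m(m-1)}=1$, each $\Delta(e)$ commutes freely past the $g$'s and
\[
d\Phi(x_\gamma)=\lambda(\gamma)\sum_{e\in\gamma}(-1)^{(m-1)(\pos(e:\gamma)-1)}\Delta(e)\,g_{\gamma\smallsetminus e}.
\]
On the other side, \refL{Dnonred} combined with the $A^{\otimes k}$-linearity of $\Phi$ yields a parallel sum whose coefficient of $\Delta(e)\,g_{\gamma\smallsetminus e}$ involves $\lambda(\gamma\smallsetminus e)$, $\nu(\gamma,e)$, and $(-1)^{mk+\pos(e:\gamma)}$, up to a Koszul sign from pulling $\Delta(e)$ through the suspension and through $y_{\gamma\smallsetminus e}$. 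Matching the two expressions edge by edge then reduces to the recursion of \refL{deflambda}, closing the forest case.

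For redundant $\gamma$, $g_\gamma=0$ in $F(A,k)$ by the Arnold and $g_{ij}^2=0$ relations, so $d\Phi(x_\gamma)=0$ and the task is to show $\Phi D(x_\gamma)=0$. By \refD{totalcofibre} and \refL{Aeps}, each summand of $D(x_\gamma)$ is, under $\Phi$, proportional to $g_{\gamma\smallsetminus e}$, with a factor $\Delta(e)$ when $e$ is a bridge and no such factor when $e$ lies in a cycle. If $\gamma$ has two or more independent cycles (equivalently $|\gamma|\geq k-\|\gamma\|+2$), then $\gamma\smallsetminus e$ is still redundant for every $e$, so each $g_{\gamma\smallsetminus e}=0$ and the sum vanishes automatically. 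Otherwise $\gamma$ contains a unique cycle $C$: edges outside $C$ produce terms whose $\gamma\smallsetminus e$ still contains $C$ and hence vanish, while the edges $e\in C$ satisfy $\|\gamma\smallsetminus e\|=\|\gamma\|$ (no $\Delta(e)$ factor) and yield $\gamma\smallsetminus e$ a forest. The required identity becomes the cycle relation
\[
\sum_{e\in C}(-1)^{\pos(e:\gamma)}\lambda(\gamma\smallsetminus e)\,g_{\gamma\smallsetminus e}=0
\]
in $F(A,k)$. For $|C|=3$ this is precisely the Arnold three-term relation normalized by the $\lambda$ signs; for $|C|>3$ I would induct on $|C|$, applying an Arnold relation to three consecutive edges of $C$ to reduce to strictly shorter cycles.

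The hardest step will be this cycle-relation verification: lining up the signs $(-1)^{\pos(e:\gamma)}\lambda(\gamma\smallsetminus e)$ (governed by \refL{sgngpos} and the $\lambda$-recursion of \refL{deflambda}) with the signs produced by the iterated Arnold relation and the graded commutativity of the $g_{ij}$'s. The forest case, by contrast, is essentially sign bookkeeping that falls out once \refL{deflambda} is invoked.
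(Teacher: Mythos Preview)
Your outline matches the paper's proof closely: reduce to the generators $x_\gamma$, handle non-redundant $\gamma$ by the recursion of \refL{deflambda} together with \refL{Dnonred}, and for redundant $\gamma$ with more than one cycle observe that every $\gamma\smallsetminus e$ stays redundant so every term vanishes. The genuine divergence is in the single-cycle case.

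For the base case $|C|=3$ you say it ``is precisely the Arnold three-term relation normalized by the $\lambda$ signs,'' but you do not indicate how to compare the three signs $\lambda(\gamma\smallsetminus e)$ against one another: \refL{deflambda} only provides the recursion for \emph{non-redundant} $\gamma$, so there is no direct $\lambda$-relation with $\gamma$ itself as the pivot. The paper handles this by first treating the specific triangle $\gamma_{123}=\{(1,2),(1,3),(2,3)\}$, using formula \refN{E:taulambda} of \refL{tau} to compare the three $\lambda(\gamma_{123}\smallsetminus e)$ (they are related by transpositions), and then invoking the already-proved $\Sigma_k$-equivariance of $\Phi$ (\refL{Phiequiv}) to transport this to an arbitrary $3$-cycle. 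This is a cleaner way to pin down the signs than going through a common sub-forest $\gamma\smallsetminus\{e_1,e_2\}$.

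For $|C|\geq 4$ you propose to ``apply an Arnold relation to three consecutive edges of $C$ to reduce to strictly shorter cycles.'' This can be made to work, but it necessarily introduces a chord (Arnold relations involve three vertices, hence a triangle), and once that chord appears you are essentially rediscovering the paper's argument. The paper makes this structural rather than computational: set $\hat\gamma:=\gamma\cup\{(1,3)\}$ and use $D^2=0$ on $x_{\hat\gamma}$. Expanding $\Phi(D(D x_{\hat\gamma}))=0$, all summands except the one indexed by $\hat\gamma\smallsetminus(1,3)=\gamma$ are covered by the inductive hypothesis (they carry a strictly shorter cycle or more than one cycle), forcing $\Phi(Dx_\gamma)=0$. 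This $D^2=0$ trick bypasses the explicit sign bookkeeping you flag as ``the hardest step'' and is worth adopting.
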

\begin{proof}
Since $\Phi$ is an $A^{\otimes k}$-module map between $A^{\otimes k}$-DGmodules,
it is enough to check this on the generators $s^{-mk}y_\gamma{\cdot}\epsilon_{\|\gamma\|}$.
For a non redundant graph it is a computation using Lemmas \ref{L:Dnonred} and
\ref{L:deflambda}.
To finish the proof we establish the following:
\\\indent\underline{Claim:}  if $\gamma$ is a redundant graph then $\Phi(D(s^{-mk}y_\gamma\epsilon_{\|\gamma\|}))=0$.\\
 For the sake of the proof we define an $l$-cycle in a graph $\gamma$ as a subset
of edges $\{(i_1,i_2),(i_2,i_3),\ldots,(i_{l-1},i_l),(i_1,i_l)\}$. A graph $\gamma$ is redundant if and only if it contains some $l$-cycle,
with $l\geq 3$, and then $g_\gamma=0$.
Notice that when $\gamma$ contains more than one cycle, in other words
 when $\gamma\smallsetminus e$ is still redundant
for any edge $e\in\gamma$,
 then the claim is obvious. So from now on we suppose that $\gamma$ contains exactly
one cycle.\\
The claim  is easy for the graph $\gamma_{123}:=\{(1,2),(1,3),(2,3)\}$ using the
Arnold relation in $F(A,k)$ (hint: to compare the different  signs $\lambda(\gamma_{123}\smallsetminus e)$,
use \refequ{E:taulambda} in \refL{tau}.) By an induction on the number of edges one deduces the 
claim for any graph containing $\gamma_{123}$ and no other cycle. By the equivariance
of $\Phi$ this implies the result for any graph containing a $3$-cycle.\\
Finally one proves the result for any graph containg an $l$-cycle, for $l\geq4$ by induction on $l$.
Indeed if $\gamma$
contains the $l$-cycle $(1,2),\cdots,(l-1,l),(1,l)$ then consider the graph $\hat\gamma:=\gamma\cup\{(1,3)\}$.
The terms of $D(s^{-mk}y_{\hat\gamma}\epsilon_{\|\hat\gamma\|})$ contains one term
indexed by $\gamma$ and other
terms indexed by a graph containg a cycle of length $<l$ or more than one cycle. 
Using that $D^2=0$ and the inductive hypothesis
one deduces the claim.
\end{proof}
\begin{lemma}
\label{L:Phiqi}
$\Phi$ is  a quasi-isomorphism.
\end{lemma}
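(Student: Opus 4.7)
The plan is to prove $\Phi$ is a quasi-isomorphism by establishing surjectivity on the chain level and then showing the kernel $K := \ker\Phi$ is acyclic. Surjectivity is immediate: $\Phi$ is $A^{\otimes k}$-linear and sends $s^{-mk}y_\gamma\epsilon_{\|\gamma\|}$ to $\lambda(\gamma)g_\gamma$ for each forest $\gamma$, and these $g_\gamma$'s $A^{\otimes k}$-linearly span $F(A,k)$ (any monomial in the $g_{ij}$'s that is not of the form $g_\gamma$ for a forest $\gamma$ vanishes in $F(A,k)$, by $g_{ij}^2=0$ together with iterated use of the Arnold relations). It therefore suffices to prove that $K$ is acyclic.

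The kernel $K$ is supported on two kinds of elements: (i) the entire summands $s^{-mk}y_\gamma\#A^{\otimes\|\gamma\|}$ for redundant (cycle-containing) graphs $\gamma$, on which $\Phi$ vanishes identically; and (ii) Arnold-type linear combinations in the forest part of the total cofibre that $\Phi$ sends to zero. The key observation is that the differential $D$ pairs these two types: applied to the top generator $s^{-mk}y_\gamma\epsilon_{\|\gamma\|}$ with $\gamma$ a single $3$-cycle $\{(i,j),(j,l),(i,l)\}$, $D$ produces a sum whose $\Phi$-image is precisely the Arnold relation $g_{ij}g_{jl}+g_{jl}g_{li}+g_{li}g_{ij}\in F(A,k)$, up to the signs absorbed into the $\lambda$'s; this pairing is already implicit in the claim within the proof of \refL{PhiD}. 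Thus each $3$-cycle contributes an acyclic pair to $K$.

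To extend this cancellation argument from the $3$-cycle case to arbitrary graphs, I would filter $K$ by the first Betti number $b(\gamma) := |\gamma|+\|\gamma\|-k$, which vanishes on forests and is strictly positive on redundant graphs. This filtration is preserved by $D$, since removing a bridge edge preserves $b$ while removing an edge in a cycle decreases $b$ by $1$. The associated spectral sequence permits an induction on $b$ that starts from the $3$-cycle computation and propagates via $D^2=0$, first to longer cycles and then to graphs with multiple independent cycles. The hard part will be the combinatorial bookkeeping for graphs with $b(\gamma)\geq 2$: distinct cycles produce interacting Arnold-type boundary relations, and one must verify that they cancel consistently. Conceptually, this compatibility is the very reason that $I$ is a differential ideal in the first place: the verification carried out in \refL{Iideal} is precisely the algebraic shadow of the cancellation pattern that makes $K$ acyclic.
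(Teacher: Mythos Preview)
Your approach is genuinely different from the paper's. You try to show directly that $\ker\Phi$ is acyclic via a filtration by the first Betti number $b(\gamma)$ of the indexing graph. The paper instead exhibits a subcomplex on which $\Phi$ restricts to an isomorphism: it singles out the set $\Gamma_0\subset\Gamma$ of forests $\{(i_1,j_1),\dots,(i_l,j_l)\}$ with the $i_s$ pairwise distinct and increasing and $i_s<j_s$, shows (by invoking \cite[Proposition~2.4]{FT:confMassey}, dualized) that the inclusion
\[
\iota\colon s^{-mk}\bigoplus_{\gamma\in\Gamma_0}y_\gamma\,\#A^{\otimes\|\gamma\|}\ \hookrightarrow\ s^{-mk}\TotCof(\#\barA)
\]
is already a quasi-isomorphism, and then simply observes that $\Phi\iota$ is an isomorphism on the nose. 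All of the combinatorics you anticipate is thereby outsourced to the cited result of F\'elix--Thomas.

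Your plan is not wrong in principle, but it is incomplete exactly where you say it is, and one point deserves a warning. The cancellation you describe does \emph{not} occur on the associated graded: the Arnold-type elements of $K$ live in the forest part at filtration level $b=0$, while the cycle summands that are supposed to kill them live at $b\geq 1$, so the ``acyclic pair'' you identify for a $3$-cycle is realized by a $d_1$-differential of the spectral sequence, not by $d_0$. Thus showing $E_1$ vanishes is not enough on any single page; you must track the interaction across pages, and for $b\geq 2$ the bookkeeping of overlapping cycles is precisely the content of the F\'elix--Thomas argument. Completing your route would essentially reprove their Proposition~2.4 by hand. If you want a self-contained argument, the efficient move is still the paper's: identify a $\Gamma_0$ with (a) $\Phi|_{\Gamma_0}$ an isomorphism and (b) $\iota$ a quasi-isomorphism; part~(b) is the graph-complex combinatorics and is already in the literature.
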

\begin{proof}
Let $\Gamma_0\subset\Gamma$ be the subset consisting of graphs of the form
$\{(i_1,j_1),\ldots,(i_l,j_l)\}$ with $1\leq i_1<\cdots<i_l\leq k$ all distinct and $i_s<j_s\leq k$ 
for $s=1,\ldots,l$. Consider the inclusion of chain complexes
\[
\iota\colon s^{-mk}\left(\oplus_{\gamma\in\Gamma_0}
y_\gamma{\cdot}\#(A^{\otimes\|\gamma\|})\right)\quad\hookrightarrow\quad
s^{-mk}\left(\oplus_{\gamma\in\Gamma}
y_\gamma{\cdot}\#(A^{\otimes\|\gamma\|})\right).
\]
An argument completely analogous to that of \cite[Proposition 2.4]{FT:confMassey} (passing to the duals)
shows that $\iota$ is a quasi-isomorphism. Since $\Phi\iota$ is an isomorphism we deduce 
that $\Phi$ is a quasi-isomorphism.
\end{proof}

\section{More general complements and ``all-or-nothing'' transversality}
\private{{\bf The following paragraphs are private comments.}
In the paper we had a system $\{X_e\hookrightarrow W:e\in E\}$ where
$E=\{(i,j):1\leq i<j\leq k$, $W=M^k$, $X_{(i,j)}=\{x\in M^k:x_i=x_j\}$.
We had the set $\Gamma=2^E$ and we have find a subset $\Gamma_0\subset\Gamma$
such that 
$$s^{-mk}\left(\oplus_{\gamma\in\Gamma_0}
y_\gamma{\cdot}\#C^*(X_\gamma)\right)$$
was quasi-isomorphic to 
$$s^{-mk}\left(\oplus_{\gamma\in\Gamma}
y_\gamma{\cdot}\#C^*(X_\gamma)\right).$$
Moreover the subset $\Gamma_0$ was ``small enough'' for $\{X_\gamma:\gamma\in\Gamma_0\}$
be sort of a ``transverse system'' or more precisely for having a quasi-isomorphism between the
first chain complex above and a certain complex
$$\oplus_{\gamma\in\Gamma_0}
u_\gamma.C^*(X_\gamma)$$
where the differential are induced by the Thom classes and $u_\gamma$ has the suitable degree
(note that in the last complex we are taking $C^*(X_\gamma)$ and not is dual as before.)\\
The advantage of that complex is that there is sort of
a natural multiplication because we are taking the cochain algebras and not thier dual.\\
The question now is how to prove that there exists always such a $\Gamma_0$.
I guess that this $\Gamma_0\subset\Gamma$ is cgharacterized by the foollowing facts:\\
- if $\gamma\in\Gamma_0$ and $\gamma'\subset\gamma$ then $\gamma'\subset\Gamma_0$;\\
- if $\gamma\in \Gamma_0$ and $e\in\gamma$ then the map
$X_\gamma\hookrightarrow X_{\gamma\smallsetminus e}$ is not the identity map;\\
-  if $\gamma\not\in \Gamma_0$, $e\not\in\gamma$then the map
$X_{\gamma\cup e}\hookrightarrow X_{\gamma}$ is the identity map.
The problem is to show that \\
(1) indeed with such characterizartion we do have the two quasi-isomorphisms claimed above\\
(2) there exists always such a subset $\Gamma_0\subset\Gamma$ when the system is all-or-nothing transverse.
By this I mean that for $\gamma\in\Gamma$ and $e\in E$, either $X_e$ is transverse to $X_\gamma$ or
$X_\gamma\subset X_e$.\\
If we can make this precise we should add it in this section
{\bf END OF PRIVATE SECTION}
}

In summary the idea that we have applied above is that first we
have build a DG-module model of $$C^*(W\smallsetminus \cup_{e\in
E}X_e)$$ of the form of a total cofibre
$$s^{-n}\TotCof(\gamma\mapsto\#C^*(X_\gamma))=s^{-n}\oplus_{\gamma\in\Gamma}
y_\gamma\#C^*(X_\gamma).$$ This only requires a mixture of
Lefschetz duality and a general Mayer-Vietoris principle. The
disadvantage of this model, which works for any system of
subpolyhedra
$$\{X_e\hookrightarrow W\}_{e\in E}$$
is that this model has no clear CDGA structure, partly because
there is no such algebra structure on the duals $\#C^*(X_\gamma)$.

In the case of the configuration space there was another model
which (non-equivariantly at least) is isomorphic to
$$s^{-n}\oplus_{\gamma\in\Gamma_0} \prod_{e\in\gamma}g_e C^*(X_\gamma)$$
which admits a clearer algebra structure. To build this model we
have applied Poincar\'e duality at the cochain level for each of
the submanifold $X_\gamma$,\\
\mbox{ $s^{-n}y_\gamma\#C^*(X_\gamma)\simeq\prod_{e\in\gamma}g_e C^*(X_\gamma)$.} 
 For this to make sense we
need first each of the $X_\gamma$ to be a closed manifold, but
also for all these Poincar\'e dualities at various formal
dimensions  to fit together to recover the Lefschetz duality for
$C^*(W\smallsetminus \cup_{e\in E}X_e)$ we needed some sort of
transversality. In a sense it is exaclty to recover this
transversality that we had to restrict to a subset
$\Gamma_0\subset\Gamma$ defined at the beginning of the proof of \refL{Phiqi}.

In fact this approach can be applied to more general space than
configuration spaces. Actually the main points that we here used
is the fact that we had an oriented manifold $W$ together with a
system of closed submanifolds \mbox{$X_\bullet:=\{X_e\hookrightarrow
W\}_{e\in E}$} such that the families of intersections
$\{\cap_{e\in\gamma}X_e\}_{\gamma\in\Gamma}$ satisfies a certain
``all-or-nothing'' transversality condition that we now
explain.

In the case where $X_\bullet$ is a \emph{total transverse system}
of submanifolds, by which we mean that for any disjoint
$\gamma_1,\gamma_2\subset E$ the submanifolds
$\cap_{e_1\in\gamma_1} X_{e_1}$ and \mbox{$\cap_{e_2\in\gamma_2}
X_{e_2}$} intersects transversally, then using cochain-level Poincar\'e duality 
gives another model of $C^*(W\smallsetminus \cup_{e\in E}X_e)$ of the form
$$(\oplus_{\gamma\in\Gamma} g_\gamma.C^*(X_\gamma),D)$$
where $\deg(g_\gamma)=\codim(X_\gamma)$, and there is a natural
CDGA structure on this when we think to $g_\gamma$ as
$\prod_{e_\in\gamma}g_e$. In other words in the case of a total
transverse system we can take $\Gamma_0=\Gamma$.

In the case of the configuration space we have $E=\{(i,j):1\leq
i<j\leq k \}$ and the familly of diagonals
$X_\bullet:=\{X_{ij}\hookrightarrow M^k\}_{(i,j)\in E}$ is
certainly not totally transverse, except when $k\leq 2$. But it
has another property which we call \emph{all-or-nothing
transverse}. By this we mean that for any
$\gamma_1,\gamma_2\subset E$ the submanifolds
$\cap_{e_1\in\gamma_1} X_{e_1}$ and $\cap_{e_2\in\gamma_2}
X_{e_2}$ either intersects transversally or one of them is
included in the other. This is the case of the system of diagonals
in $M^k$. Using that it is then always possible to find a subset
$\Gamma_0\subset \Gamma$ such that $C^*(W\smallsetminus \cup_{e\in
E}X_e)$ has a model of the form
$$(\oplus_{\gamma\in\Gamma_0} u_\gamma.C^*(X_\gamma),D)$$
where $u_\gamma=\codim(X_\gamma)$, and again this comes with a
natural CDGA structure on this. Actually the subset $\Gamma_0\subset\Gamma$ is characterized by the fact
that if $\gamma\in\Gamma_0$ and $\gamma'\subset\gamma$ then $\gamma'\in\Gamma_0$ 
and, for $e\not\in\gamma$, we have $\gamma\cup\{e\}\in\Gamma_0$ if and only if 
$X_{\gamma\cup\{e\}}'\not= X_\gamma$. We do not claim that it is a CDGA
model in general, and finding suitable conditions for this to be
true is certainly an interesting but difficult problem.

This approach could be usefull to the study of other complements,
like systems of projective subspaces in $\BC P(n)$ but we will not develop this
further in this paper.

\def\cprime{$'$}

\end{document}